\title[Norm Form Equations and Linear Divisibility Sequences]{Norm Form Equations and \\ Linear Divisibility Sequences}
\author{Elisa Bellah}
\begin{document}
\begin{abstract}
Finding integer solutions to norm form equations is a classical Diophantine problem. Using the units of the associated coefficient ring, we can produce sequences of solutions to these equations. It is known that these solutions can be written as tuples of linear recurrence sequences. We show that for certain families of norm forms defined over quartic fields, there exist integrally equivalent forms making any one fixed coordinate sequence a linear divisibility sequence.
\end{abstract}

\maketitle

%------------------- 1. Introduction ---------------------------
\section{Introduction}
\label{sec: 1}
Let $K$ be a number field, and $W=\{w_1, \dots, w_n\}$ a $\Q$-linearly independent subset of $K$. The \textit{norm form} associated to the set $W$ is the rational form defined by
\begin{equation}
\label{normformdef} F_W(X_1, \dots, X_n):=N_K(X_1w_1+\cdots+X_nw_n).
\end{equation}
Given a norm form $F_W$, it is a classical Diophantine problem to ask for integer solutions to equations of the form
\begin{equation}\label{normform}
F_W(X_1, \dots, X_n)=c,
\end{equation}
where $c$ is a fixed nonzero integer. We consider two simple examples. 

\begin{example*} Let $D$ be a nonsquare integer. If we let $W = \{1, \sqrt{D}\}$, then the corresponding norm form defined over $\Q(\sqrt{D})$ is $F_W(X, Y) =X^2-DY^2$. In this case, we see that (\ref{normform}) is a Pell-type equation.
\end{example*}

\begin{example*} If $W$ is an integral basis for a number field $K$, the set of solutions to (\ref{normform}) with $c =\pm 1$ gives a complete list of units in $K$. So, the problem of finding units in a number field can be interpreted as such a Diophantine problem. 
\end{example*}

It is well known that integer solutions to (\ref{normform}) can be generated by tuples of linear recurrence sequences. In Theorems \ref{thm1} and \ref{thm2}, we consider norm forms defined over certain quartic fields, and show how to explicitly construct integrally equivalent forms so that these recurrence sequences are linear divisibility sequences. Before stating our main results, we first discuss the relevant background.\\

Given a $\Q$-linearly independent set $W$, let $M$ be the $\Z$-module in $K$ generated by $W$. Observe that if $T$ is another basis for $M$, the norm forms $F_W$ and $F_T$, which are defined in (\ref{normformdef}), are integrally equivalent. That is, there exist integers $a_{ij}$ so that if 
\[Y_i = \sum_{j=1}^n a_{ij} X_j,\]
then we have $F_W(X_1, \dots, X_n) = F_T(Y_1, \dots, Y_n)$. So, integer solutions to (\ref{normform}) can be found by instead studying the elements in the associated module $M$ of fixed norm $c$. Let 
\[\O_M:=\{\alpha \in K \mid \alpha M \subseteq M\}\] denote the \textit{coefficient ring} of the module $M$. 
It is well-known that when the module $M$ is full in $K$ (that is, when $\rank M =[K: \Q]$)  the set of elements in $M$ of fixed norm $c$ can be written as a disjoint union of finitely many families
\[\alpha_1\, \U_M^{+}, \dots, \alpha_{\ell}\, \U_M^{+},\]
where 
\begin{equation}
\label{positiveunitgroup} \U_M^+ := \{\epsilon \in \O_M \mid N_K(\epsilon)=1\}
\end{equation}
denotes the \textit{positive unit group} of $M$ (see Chapter 2 of \cite{borevich}, or Section 2 of \cite{schmidt}, for example). In \cite{schmidt}, Schmidt also gave a similar characterization in the case where $M$ is not full.\\

Let $\beta$ be an element in $M$ with $N_K(\beta)=c$, and suppose that $K \not=\Q$ and that $K$ is not imaginary quadratic. By Dirichlet's unit theorem, $\U_M^+$ is a finitely generated abelian group with positive rank. So, for any nontorsion element $\epsilon$ of $\U_M^+$, we can generate an infinite sequence of elements in $M$ of fixed norm $c$ given by 
\[
\alpha(k)=\beta\epsilon^k, \text{ where $k \in \Z_{\geq 0}$}.
\]
So, if we write
\begin{equation} \label{xidef}
\alpha(k)=x_1(k) w_1+\cdots+x_n(k) w_n,
\end{equation}
then we obtain infinitely many solutions $(x_1(k), \dots, x_n(k))$ to (\ref{normform}). Furthermore, the characterization above implies that all solutions to (\ref{normform}) are obtained in this way.

\begin{mydef} A linear recurrence sequence $b(k)$ is a \textit{linear divisibility sequence} (LDS) if $b(k)$ has the following property: for all $n, m \in \Z_{> 0}$,
\[ n \mid m \Rightarrow b(n) \mid b(m).\]
\end{mydef}

Divisibility sequences have been widely studied. Oftentimes, this extra structure is helpful in understanding further number theoretic properties of a given sequence. For example, every Lucas sequence is a LDS. This property was used in \cite{BHV} to study the primitive divisors of Lucas sequences, and in \cite{smyth} to study their index divisibility sets, as well as in many other results throughout the literature. Elliptic Divisibility Sequences, introduced by Ward in \cite{ward}, are examples of nonlinear divisibility sequences. Similar results for these sequences have also been found, such as in \cite{stangesilverman} and \cite{EDSprimdivs}.\\

In this paper, we show that for certain families of norm equations defined over quartic fields, we can find an integrally equivalent form so that one of the sequences $\{x_i(k): k \in \Z_{\geq 0}\}$ defined in (\ref{xidef}) is a linear divisibility sequence. The families we consider are motivated by the following theorem of Kubota.

\begin{prop}[\cite{kubota}, Theorem 1] \label{kubota} Let $K$ be a real biquadratic field with quadratic subfields $L_i$, and let $\epsilon_i$ be a fundamental unit of $L_i$. Then, $K$ has a system of fundamental units of one of the following forms, up to relabeling:
\begin{enumerate}
\item[(i)] $\epsilon_1, \epsilon_2, \epsilon_3$
\item[(ii)] $\sqrt{\epsilon_1}, \epsilon_2, \epsilon_3$
\item[(iii)] $\sqrt{\epsilon_1}, \sqrt{\epsilon_2}, \epsilon_3$
\item[(iv)] $\sqrt{\epsilon_1\epsilon_2}, \epsilon_3, \epsilon_3$
\item[(v)] $\sqrt{\epsilon_1\epsilon_2}, \sqrt{\epsilon_3}, \epsilon_2$
\item[(vi)] $\sqrt{\epsilon_1\epsilon_2}, \sqrt{\epsilon_2\epsilon_3}, \sqrt{\epsilon_3\epsilon_1}$
\item[(vii)] $\sqrt{\epsilon_1\epsilon_2\epsilon_3}, \epsilon_2, \epsilon_3$
\item[(viii)] $\sqrt{\epsilon_1\epsilon_2\epsilon_3}, \epsilon_2, \epsilon_3$, with $N_{K_1}(\epsilon_i) = -1$ for $i = 1, 2, 3$,
\end{enumerate}
where $\sqrt{\epsilon}$ denotes any element $\eta \in K$ with $\eta^2 = \epsilon$.
Furthermore, there are infinitely many $K$ of each type.
\end{prop}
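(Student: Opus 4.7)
My plan is to analyze the index of the subgroup $U := \langle -1, \epsilon_1, \epsilon_2, \epsilon_3 \rangle$ inside $\O_K^*$, show this quotient has exponent $2$, and then carry out a Kummer-style classification of the possibilities.

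First I would verify that $U$ has finite index in $\O_K^*$. Since $K$ is totally real of degree $4$, Dirichlet gives $\mathrm{rank}(\O_K^*) = 3$, and the logarithmic embeddings of $\epsilon_1, \epsilon_2, \epsilon_3$ are independent because each $\epsilon_i$ is fixed by the element of $\mathrm{Gal}(K/\Q)$ generating $\mathrm{Gal}(K/L_i)$, forcing the three log vectors to lie in pairwise distinct hyperplanes. Hence $[\O_K^* : U] < \infty$.

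Next I would establish the identity forcing exponent $2$. Writing $\mathrm{Gal}(K/\Q) = \{1,\sigma_1,\sigma_2,\sigma_3\}$ with $\sigma_i$ the nontrivial automorphism fixing $L_i$, for any $\eta \in \O_K^*$ one computes
\[
\prod_{i=1}^{3} N_{K/L_i}(\eta) \;=\; \eta^{3}\,\sigma_1(\eta)\sigma_2(\eta)\sigma_3(\eta) \;=\; \eta^{2}\cdot N_{K/\Q}(\eta).
\]
Since $N_{K/\Q}(\eta) = \pm 1$ and each $N_{K/L_i}(\eta)$ is a unit of $L_i$, hence of the form $\pm \epsilon_i^{n_i}$, we conclude $\eta^2 \in U$. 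Therefore $Q := \O_K^*/U$ is an $\mathbb{F}_2$-vector space of dimension $r \in \{0,1,2,3\}$.

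The heart of the proof is classifying the $\mathbb{F}_2$-subspaces $\widetilde{U} \subseteq U/U^2$ whose nonzero classes admit a square root in $K$, up to the action of $\mathrm{Gal}(K/\Q)$ permuting the indices $\{1,2,3\}$. Representing an element of $U/U^2$ as a vector $(\delta,a_1,a_2,a_3) \in \mathbb{F}_2^4$ corresponding to $(-1)^{\delta}\epsilon_1^{a_1}\epsilon_2^{a_2}\epsilon_3^{a_3}$, I would enumerate candidate subspaces by dimension and use Galois-theoretic constraints to decide realizability. Existence of $\sqrt{\epsilon_i}$ in $K$ forces $K = L_i(\sqrt{\epsilon_i})$, a strong condition on the three subfields; existence of $\sqrt{\epsilon_i\epsilon_j}$ in $K$ is governed by a compatibility between the signs of $N_{L_i/\Q}(\epsilon_i)$ and $N_{L_j/\Q}(\epsilon_j)$; and whether $\sqrt{\epsilon_1\epsilon_2\epsilon_3}$, as opposed to only $\sqrt{-\epsilon_1\epsilon_2\epsilon_3}$, lies in $K$ produces the split between cases (vii) and (viii). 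After collapsing by relabeling, the realizable $\mathrm{Gal}$-orbits should be exactly (i)--(viii).

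The main obstacle I anticipate is completeness of this classification: ruling out $\mathbb{F}_2$-subspaces that look admissible linearly but contain no actual element of $K^{\times 2}$. I would attack this by combining ramification data at primes dividing $\mathrm{disc}(K)$ with the product formula for signs of unit norms under $N_{K/L_i}$. For the second half of the statement, I would exhibit explicit parametric families $\Q(\sqrt{p},\sqrt{q})$ with $p,q$ in prescribed arithmetic progressions modulo small auxiliary moduli, chosen so that the norm and square-class conditions distinguishing the eight types are each met infinitely often.
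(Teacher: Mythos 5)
First, a point of orientation: the paper does not prove this proposition. It is imported verbatim as Theorem~1 of Kubota's 1956 paper (with some transcription slips in the statement --- note the repeated $\epsilon_3$ in case (iv), and that (vii) and (viii) list identical generators distinguished only by a norm condition) and is used solely to motivate the trichotomy (a)--(c) that follows it. So there is no argument of the paper's to compare yours against; I can only judge the sketch on its own terms, against what Kubota's proof actually requires.

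Your opening reduction is correct and is indeed the classical starting point: the identity $\prod_{i=1}^{3}N_{K/L_i}(\eta)=\eta^{2}N_{K/\Q}(\eta)$ shows that the square of every unit lies in $U=\langle -1,\epsilon_1,\epsilon_2,\epsilon_3\rangle$, so $\O_K^{*}/U$ is elementary abelian of order dividing $8$. (One small repair: ``the three log vectors lie in pairwise distinct hyperplanes'' does not imply their independence; the clean statement is that $\log\epsilon_i$ spans the isotypic line for the $i$-th nontrivial character of the Klein four-group acting on the rank-$3$ unit lattice, and the three characters are distinct.) The genuine gap is that the step you yourself label ``the heart of the proof'' is left entirely undone, and it \emph{is} the theorem. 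You would need to actually establish the realizability constraints you allude to: for instance, since $K$ is Galois and totally real, $\sqrt{\epsilon_i}\in K$ forces $\sqrt{\bar{\epsilon}_i}\in K$ and hence $\sqrt{N_{L_i/\Q}(\epsilon_i)}\in K$, so $N_{L_i/\Q}(\epsilon_i)=+1$; similarly $\sqrt{\epsilon_i\epsilon_j}\in K$ forces $N_{L_i/\Q}(\epsilon_i)\,N_{L_j/\Q}(\epsilon_j)=+1$; and closure relations such as $\sqrt{\epsilon_1},\sqrt{\epsilon_1\epsilon_2}\in K\Rightarrow\sqrt{\epsilon_2}\in K$ are what collapse the a priori list of subgroups of $U/U^{2}$ to exactly eight orbits, including the delicate point of which index-$8$ configurations survive. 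None of this is carried out; ``the realizable orbits should be exactly (i)--(viii)'' asserts the conclusion rather than deriving it. The same applies to the final claim: producing infinitely many fields of each type is a separate, nontrivial construction (it occupies its own portion of Kubota's paper), and ``choose $p,q$ in prescribed progressions'' neither identifies the progressions nor verifies that they force the intended unit index. As a roadmap the proposal is sound and matches the standard strategy; as a proof it is missing its main body.
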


Proposition \ref{kubota} tells us that to study solutions to a norm form equation \[F_W(X_1, X_2, X_3, X_4)=c\] defined over a real biquadratic field, it suffices to understand the coordinates of the sequences $\alpha(k) = \beta \eta^k$ where $\eta$ is of one of the following three types:
\begin{enumerate}
\item[(a)] $\eta$ is a unit in quadratic subfield of $K$,
\item[(b)] $\eta^2$ is a unit in a quadratic subfield of $K$, or
\item[(c)] $\eta$ is a product of units of types $(a)$ and $(b)$
\end{enumerate}

Our main results concern the sequences $\alpha(k) = \beta \eta^k$ where $\eta$ is type (b). In fact, our results hold for quartic fields containing a unit of type (b) more generally. We will show the following. 

\begin{thm} \label{thm1} Let $K$ be a quartic field with a real quadratic subfield $L$ containing a quartic unit $\eta$ of positive norm, so that $\eta^2$ is a unit in $L$. Fix an element $\beta \in K$, and write $\alpha(k)=\beta \eta^k$. Then, there is a choice of basis  $W=\{w_1, w_2, w_3, w_4\}$ for the module $M'=\beta \,\Z[\eta]$, which we construct explicitly, so that if we write
\[\alpha(k)=x_1(k)w_1+\cdots+x_4(k)w_4\]
then $\{x_1(k): k \in \Z_{\geq 0}\}$ is a LDS.
\end{thm}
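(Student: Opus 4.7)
The plan is to exploit a second, hidden quadratic subfield of $K$. Since $\eta$ is a unit of positive norm with $\eta^2 \in L$, we first observe that $N_K(\eta) = (\eta\bar\eta)^2 = 1$ and $N_L(\eta^2) = 1$, so $\epsilon := \eta^2$ satisfies $\epsilon^2 - t\epsilon + 1 = 0$ for $t := \mathrm{Tr}_L(\epsilon) \in \Z$, and $\eta$ has minimal polynomial $X^4 - tX^2 + 1$; in particular $\eta^{-1} = t\eta - \eta^3 \in \Z[\eta]$. Setting $\gamma := \eta + \eta^{-1}$, we have $\gamma^2 = t+2$, so $\Q(\gamma) \subset K$ is a quadratic subfield distinct from $L$, and $\eta$ satisfies $X^2 - \gamma X + 1 = 0$ over it. A short unimodular calculation shows that $\{1,\gamma,\eta,\gamma\eta\}$ is a $\Z$-basis of $\Z[\eta]$, so $\beta\cdot\{1,\gamma,\eta,\gamma\eta\}$ is a $\Z$-basis of $M'$.

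Using the relation $\eta^2 = \gamma\eta - 1$, an easy induction writes $\eta^k = -U_{k-1} + U_k\,\eta$, where $U_k \in \Z[\gamma]$ is the Lucas $U$-sequence defined by $U_0 = 0,\,U_1 = 1,\,U_{k+1} = \gamma U_k - U_{k-1}$; equivalently $U_k = (\eta^k - \eta^{-k})/(\eta - \eta^{-1})$. Writing $U_k = p_k + q_k\gamma$ with $p_k, q_k \in \Z$, the coupled recurrences $p_{k+1} = (t+2)q_k - p_{k-1}$ and $q_{k+1} = p_k - q_{k-1}$ force $q_k = 0$ for $k$ odd and $p_k = 0$ for $k$ even. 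Therefore the coordinates of $\alpha(k) = \beta\eta^k$ in $\beta\cdot\{1,\gamma,\eta,\gamma\eta\}$ are $(-p_{k-1},\,-q_{k-1},\,p_k,\,q_k)$. Applying the unimodular change of basis to, for instance,
\[W = \{\beta\eta,\;\beta,\;\beta\gamma,\;\beta(\gamma-1)\eta\}\]
makes the first coordinate of $\alpha(k)$ equal to $x_1(k) = p_k + q_k$.

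The heart of the argument is verifying that $x_1$ is a LDS. The classical identity $U_n \mid U_m$ in $\Z[\gamma]$ whenever $n \mid m$ follows from the factorization
\[\eta^{nd} - \eta^{-nd} = (\eta^n - \eta^{-n})\sum_{i=0}^{d-1}\eta^{n(d-1-2i)},\]
since the second factor is symmetric in $\eta,\eta^{-1}$ and hence lies in $\Z[\gamma]$. Writing $U_m = U_n(v_1 + v_2\gamma)$ for some $v_1, v_2 \in \Z$, a parity split on $n$ completes the proof: for $n$ odd, $U_n = p_n$ gives $p_m = p_n v_1$ and $q_m = p_n v_2$, so $p_n \mid p_m + q_m$; for $n$ even, $U_n = q_n\gamma$ together with $\gamma^2 = t+2$ gives $p_m = (t+2)q_n v_2$ and $q_m = q_n v_1$, so $q_n \mid p_m + q_m$. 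Either way $x_1(n) \mid x_1(m)$. The key conceptual step — and the main obstacle — is recognizing the correct combination: since $p_k$ and $q_k$ each vanish on an arithmetic progression of indices, neither is individually a LDS, but the parity pattern collapses each $U_n$ into a pure-rational or pure-$\gamma$ element, so the integer divisibility in $\Z[\gamma]$ carries through to the sum $p_k + q_k$.
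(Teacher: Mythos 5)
Your proof is correct, and while the basis you construct is equivalent to the paper's (your $x_1(k)=p_k+q_k$ has initial values $0,1,1,t+1$ and satisfies $x_1(k+4)=t\,x_1(k+2)-x_1(k)$, which is precisely the sequence the paper isolates, with $a=1$ and $T=t$), your proof of the divisibility is a genuinely different route. The paper works entirely over $L=\Q(\eta^2)$: it shows $x_1(2n)=u_n$ and $x_1(2n+1)=u_{n+1}+u_n$ for the rational Lucas sequence $u_n$ with parameters $(T,1)$, and then checks $x_1(k)\mid x_1(k\ell)$ by a three-way parity case analysis (even $k$; odd $k$ with even $\ell$; odd $k$ with odd $\ell$), the last case via an explicit computation with powers of $\epsilon$. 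You instead work over the \emph{other} quadratic subfield $\Q(\gamma)$ with $\gamma=\eta+\eta^{-1}$, $\gamma^2=t+2$, and reduce everything to the single fact $U_n\mid U_m$ in $\Z[\gamma]$, with only a two-way parity split on $n$ needed to pass from divisibility in $\Z[\gamma]$ to divisibility of $p+q$ in $\Z$. This is shorter and more conceptual: it explains why the combination $p_k+q_k$ (rather than $p_k$ or $q_k$ individually, each of which vanishes on a parity class) is the right coordinate, and it locates the LDS structure over $\Q(\sqrt{T+2})$ rather than over $L=\Q(\sqrt{T^2-4})$. Two small points to tighten: first, the cofactor $\sum_{i=0}^{d-1}\eta^{n(d-1-2i)}$ lies in $\Z[\gamma]$ because it is a sum of terms $\eta^{nj}+\eta^{-nj}$ (plus $1$ when $d$ is odd), each an integer polynomial in $\gamma$ since $\eta\cdot\eta^{-1}=1$; the phrase ``symmetric hence in $\Z[\gamma]$'' deserves that one line. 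Second, the distinctness of $\Q(\gamma)$ from $L$ is not actually needed; all that matters is $\gamma\notin\Q$ (which follows since otherwise $\eta$ would be quadratic) and $\gamma^2\in\Z$.
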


\begin{thm}\label{thm2}   Let $M=\Z[\sqrt{m}, \sqrt{m+1}]$, where $m$ and $m+1$ are non-square integers. Then, $\eta=\sqrt{m}+\sqrt{m+1}$ is a unit in the positive unit group $\U_M^+$ with $\eta^2$ a unit in a quadratic subfield of $K=\Q(\eta)$, and there is a choice of basis $W=\{w_1, w_2, w_3, w_4\}$ for the module $M$, which we construct explicitly, so that if we write
\[\eta^k=x_1(k)w_1+\cdots+x_4(k)w_4,\]
then $\{x_1(k): k \in \Z_{\geq 0}\}$ is a LDS. 
\end{thm}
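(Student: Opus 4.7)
The plan is to work directly with the quartic unit $\eta = \sqrt{m}+\sqrt{m+1}$ and the quadratic substructure of $M$. For the preliminary claims, squaring gives $\eta^2 = (2m+1) + 2\sqrt{m^2+m}$, which lies in the real quadratic subfield $L := \Q(\sqrt{m^2+m})$ and has $L$-norm $(2m+1)^2 - 4(m^2+m) = 1$. The four Galois conjugates of $\eta$ are $\pm\eta, \pm\eta^{-1}$, whose product is $+1$, so $N_{K/\Q}(\eta) = 1$. Since $\eta \in M$ and $M$ is a ring, $\eta M \subseteq M$, and so $\eta \in \U_M^+$.

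For the basis I propose the explicit choice $W := \{\sqrt{m},\,1,\,\sqrt{m+1},\,\delta\}$ where $\delta := \sqrt{m(m+1)}$, which is clearly a $\Z$-basis for $M = \Z[\sqrt{m},\sqrt{m+1}]$. The strategy exploits the parity decomposition: even powers of $\eta$ lie in $\Z + \Z\delta \subset L$, while odd powers lie in $\Z\sqrt{m} + \Z\sqrt{m+1}$. Writing $\eta^{2j} = A_j + B_j\delta$ with $A_j, B_j$ satisfying the common recurrence $X_{j+1} = (4m+2)X_j - X_{j-1}$ (from the minimal polynomial of $\eta^2$) and initial data $(A_0,A_1,B_0,B_1) = (1,\,2m+1,\,0,\,2)$, multiplying by $\eta$ and using $\sqrt{m}\,\delta = m\sqrt{m+1}$, $\sqrt{m+1}\,\delta = (m+1)\sqrt{m}$ gives
\[ \eta^{2j+1} = \bigl(A_j + (m+1)B_j\bigr)\sqrt{m} + \bigl(A_j + mB_j\bigr)\sqrt{m+1}. \]
Hence $x_1(2j) = 0$ and $x_1(2j+1) = A_j + (m+1)B_j$; in particular $(x_1(k))_{k\geq 0}$ is a $\Z$-valued linear recurrence sequence (satisfying the relation coming from $\eta$'s minimal polynomial $x^4 - (4m+2)x^2 + 1$).

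The LDS property comes from recognizing the odd subsequence as a Lucas $U$-sequence. From $\eta + \eta^{-1} = 2\sqrt{m+1}$ and $\eta \eta^{-1} = 1$, the elements $\eta, \eta^{-1}$ are the roots of $T^2 - 2\sqrt{m+1}\,T + 1$, and the associated Lucas sequence $U_k := U_k(2\sqrt{m+1},1) \in \Z[\sqrt{m+1}]$ satisfies $U_k = (\eta^k - \eta^{-k})/(2\sqrt{m})$. A direct calculation confirms $U_{2j+1} = x_1(2j+1) \in \Z$, while $U_{2j} = B_j\sqrt{m+1}$ is irrational (but $x_1$ vanishes on even indices, so this does not interfere). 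The LDS check then splits on the parities of $n \mid k$ with $n, k > 0$: if $n$ is even so is $k$ and both $x_1$-values are zero; if $n$ is odd and $k$ is even, $x_1(k) = 0$; and if both are odd, the classical Lucas divisibility $U_n \mid U_k$, which is a polynomial identity in $\Z[P,Q]$, combined with $\Z[\sqrt{m+1}] \cap \Q = \Z$, forces $x_1(n) \mid x_1(k)$ in $\Z$. The main obstacle is precisely this descent step: a divisibility originally taking place in $\Z[\sqrt{m+1}]$ (because of the irrational parameter $2\sqrt{m+1}$) must be shown to yield honest integer divisibility of the coordinate sequence.
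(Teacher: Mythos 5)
Your proof is correct, but it takes a genuinely different route from the paper's. The paper starts from the power basis $\{1,\eta,\eta^2,\eta^3\}$, computes the Smith normal form $XBY=\diag(1,1,2,2)$ of the matrix $B$ relating it to $\{1,\sqrt m,\sqrt{m+1},\sqrt{m(m+1)}\}$, and invokes the lifting criterion of Proposition \ref{quarticprop} to produce a basis of $M$ for which $x_1$ has the initial data $0,a,a,a(T+1)$ of Proposition \ref{prop}; the resulting sequence is nonvanishing in both parities, with $x_1(2n)=au_n$ and $x_1(2n+1)=a(u_{n+1}+u_n)$, and the construction forces $a=\delta_4=2$. You instead take the evident basis $\{\sqrt m,1,\sqrt{m+1},\delta\}$ of $M$ and exploit the parity splitting $\eta^{2j}\in\Z+\Z\delta$, so that $x_1$ vanishes at even indices and its odd-index values are the odd-index terms of the Lucas sequence $U_k\bigl(2\sqrt{m+1},1\bigr)$; the divisibility check is then immediate except in the odd--odd case, where the polynomial identity $U_n\mid U_{nk}$ in $\Z[P,Q]$ together with $\Z[\sqrt{m+1}]\cap\Q=\Z$ closes the argument (your computations $\eta^2=(2m+1)+2\delta$, $N_L(\eta^2)=1$, $x_1(2j+1)=A_j+(m+1)B_j=U_{2j+1}$ all check out). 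Your approach buys a shorter, fully explicit proof that bypasses the Smith-normal-form machinery and even achieves $x_1(1)=1$ rather than $2$; what it gives up is that your coordinate sequence is degenerate (half its terms vanish), whereas the paper's argument is an instance of a general criterion (Propositions \ref{prop} and \ref{quarticprop}) applicable to arbitrary full modules and yielding a sequence with no forced zeros. Two points worth making explicit in a final write-up: $U_n\neq 0$ for $n\geq 1$ (so the division in the odd--odd case is legitimate), which holds because $\eta>1$ is not a root of unity; and the convention $0\mid 0$ used in the even--even case.
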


\begin{rmk} Note that Theorems \ref{thm1} and \ref{thm2} hold for the sequence \[\{x_i(k): k \in \Z_{\geq 0}\}\] for any fixed $i \in \{1, 2, 3, 4\}$, just by changing the basis to reindex our coordinates. However, we show in Sections \ref{sec: 3} and \ref{sec: 4} that there does not exist a choice of basis for the modules $M'$ and $M$ in Theorems \ref{thm1} and \ref{thm2} so that the coordinate sequences $x_1(k), x_2(k), x_3(k), x_4(k)$ defined in (\ref{xidef}) are LDS simultaneously. 
\end{rmk}

This paper is organized as follows. In Section \ref{sec: 2}, we show that the sequences $\{x_i(k): k \in \Z_{\geq 0}\}$ defined in (\ref{xidef}) are linear recurrence sequences, each with characteristic polynomial equal to the minimal polynomial of our unit $\epsilon$. In Section \ref{sec: 3}, we provide some background on Lucas sequences, and discuss how this can be used to study norm forms defined over real quadratic fields. In Section \ref{sec: 4}, we prove Theorems \ref{thm1} and \ref{thm2}. In Section \ref{sec: 5}, we discuss a related sequence proposed by Silverman in \cite{algints}, and provide examples where Conjecture 9 of this paper holds.

%------------------- 2. Coordinate Sequences ---------------------------
\section{Coordinate Sequences}
\label{sec: 2}
Let $M$ be a full module in a number field $K$, and $\epsilon$ a nontorsion element in the positive unit group $\U_M^+$ defined in (\ref{positiveunitgroup}). For $\beta \in M$ with $N_K(\beta)=c$, set $\alpha(k)=\beta \epsilon^k$. If we choose a basis $W=\{w_1, \dots, w_n\}$ for $M$, and write
\begin{equation}
\label{xidef2} \alpha(k)=x_1(k)w_1+\cdots+x_n(k)w_n,
\end{equation}
then we obtain tuples of solutions $(x_1(k), \dots, x_n(k))$ to the corresponding norm form equation $F_W(X_1, \dots, X_n)=c$. 

\begin{mydef} We call the integer sequences $\{x_i(k) : k \in \Z_{\geq 0}\}$, where $x_i(k)$ is defined in (\ref{xidef2}), the \textit{coordinate sequences} of $\alpha(k)$ with respect to our choice of basis $W$. 
\end{mydef}

Let $b(k)$ be an integer sequence satisfying the linear homogeneous recurrence
\begin{equation} \label{recurrence} b(k+d)=s_1b(k+d-1)+\cdots+s_db(k), \end{equation}
where $s_i \in \Z$ and $d \in \Z_{\geq 0}$ are fixed. Then, the \textit{characteristic polynomial} for this recurrence is given by 
\[f(X)=X^d-s_1X^{d-1}-\cdots-s_d.\]
When recurrence (\ref{recurrence}) is of minimal order (that is, when $d$ is minimal), $f(X)$ is called the \textit{minimal polynomial} of the sequence $b(k)$. \\

In this section, we show that the coordinate sequences $\{x_i(k): k \in \Z_{\geq 0}\}$ have characteristic polynomial equal to the minimal polynomial of $\epsilon$. We also provide sufficient conditions so that the minimal polynomial of the sequence $\{x_i(k): k \in \Z_{\geq 0}\}$ is equal to the minimal polynomial of $\epsilon$. 

\begin{prop} \label{prop1} Let $K$ be a number field, and take elements $\gamma, \theta \in K$. Consider the sequence
$x(k)=\Tr_{K/ \Q}(\gamma \theta^k).$
\begin{enumerate}
\item[(a)] The sequence $x(k)$ satisfies a linear homogeneous recurrence with characteristic polynomial equal to the minimal polynomial of $\theta$.
\item[(b)] Let $L=\Q(\theta)$. If $\Tr_{K/L}(\gamma) \not=0$, then the minimal polynomial of the sequence $x(k)$ is equal to the minimal polynomial of $\theta$. 
\end{enumerate}
\end{prop}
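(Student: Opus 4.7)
The plan is to treat both parts as consequences of the $\Q$-linearity of $\Tr_{K/\Q}$, combined with its factorization through the intermediate field $L = \Q(\theta)$.

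For (a), let $f(X) = X^d - s_1 X^{d-1} - \cdots - s_d$ be the minimal polynomial of $\theta$ over $\Q$, so that $f(\theta) = 0$. Multiplying by $\gamma \theta^k$ yields the identity $\gamma \theta^{k+d} = s_1 \gamma \theta^{k+d-1} + \cdots + s_d \gamma \theta^k$ in $K$, and applying the $\Q$-linear map $\Tr_{K/\Q}$ to both sides gives the recurrence
\[
x(k+d) = s_1 x(k+d-1) + \cdots + s_d x(k),
\]
whose characteristic polynomial is exactly $f(X)$.

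For (b), I would derive a Binet-style closed form for $x(k)$ and then argue by irreducibility. Factoring the trace through the tower $K/L/\Q$ and using $\theta^k \in L$,
\[
x(k) = \Tr_{L/\Q}\bigl(\Tr_{K/L}(\gamma \theta^k)\bigr) = \Tr_{L/\Q}(c\, \theta^k), \qquad c := \Tr_{K/L}(\gamma) \in L.
\]
Letting $\sigma_1, \ldots, \sigma_d : L \hookrightarrow \overline{\Q}$ be the distinct embeddings, this unfolds to $x(k) = \sum_{i=1}^d \sigma_i(c)\, \sigma_i(\theta)^k$. Since $\theta$ is a primitive element of $L$, the values $\sigma_i(\theta)$ are pairwise distinct; since each $\sigma_i$ is injective, the hypothesis $c \neq 0$ forces $\sigma_i(c) \neq 0$ for every $i$. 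The minimal polynomial of the sequence $x(k)$ divides the characteristic polynomial $f(X)$ exhibited in (a), and because $f$ is irreducible over $\Q$, this minimal polynomial is either $1$ or $f$ itself. The former possibility would force $x(k) \equiv 0$, which contradicts the linear independence of the distinct exponential sequences $\{\sigma_i(\theta)^k\}_{k \geq 0}$ (one can see this directly from a Vandermonde determinant on the first $d$ values). Hence the minimal polynomial of $x(k)$ equals $f(X)$.

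The only step demanding care is the passage from ``characteristic polynomial'' to ``the minimal polynomial of the sequence divides $f(X)$,'' which rests on the standard fact that the polynomials annihilating a linear recurrence sequence form a principal ideal in $\Q[X]$ generated by its minimal polynomial. Beyond this, the argument reduces to elementary manipulations with the trace and a standard linear independence fact, so I do not anticipate further obstacles.
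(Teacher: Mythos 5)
Your proof is correct. For part (a) you and the paper do essentially the same computation: you apply the $\Q$-linearity of $\Tr_{K/\Q}$ to the identity $\gamma\theta^{k}f(\theta)=0$, while the paper expands $x(k)$ as a sum $\sum_i\gamma_i\theta_i^k$ over embeddings of the Galois closure and verifies the recurrence termwise using that each $\theta_i$ is a root of $f$; your version is the more streamlined of the two. For part (b) your route is genuinely different. The paper supposes a recurrence of order $m\le d$, forms $C=\theta^m-\sum_i r_i\theta^{m-i}$, groups the embeddings according to the value they assign to $\theta$, and shows by a discriminant (Vandermonde-type) nonvanishing that $C\ne 0$ would force all the partial sums $S_i$ to vanish, contradicting $S_1=\Tr_{K/L}(\gamma)\ne 0$; hence $C=0$ and $m=d$. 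You instead collapse the problem to $L$ at the outset via transitivity of the trace, writing $x(k)=\Tr_{L/\Q}(c\,\theta^k)$ with $c=\Tr_{K/L}(\gamma)$, and then invoke the principal-ideal structure of the annihilator of a recurrence sequence together with the irreducibility of $f$ over $\Q$: the minimal polynomial of the sequence divides $f$, so it is either $1$ or $f$, and the former would make $x$ identically zero, which the Vandermonde determinant in the distinct conjugates $\sigma_i(\theta)$ rules out since every $\sigma_i(c)\ne 0$. Your argument buys conceptual economy and makes transparent exactly where the hypothesis $\Tr_{K/L}(\gamma)\ne 0$ enters (it is precisely $c\ne 0$); the paper's argument is more explicit and does not lean on irreducibility, instead pinning down $C=0$ directly. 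The only point you rightly flag --- that the annihilating polynomials form a principal ideal generated by the minimal polynomial of the sequence --- is standard, and since that generator is monic and divides the monic integral $f$, it also has integer coefficients, so it matches the paper's definition of the minimal recurrence. Both proofs are complete.
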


\begin{rmk} Suppose that $\theta$ has minimal polynomial 
\[f(X)= X^d -s_1 X^{d-1} - \cdots - s_d.\]
Then, Proposition \ref{prop1}(a) implies that the sequence $x(k) = \Tr_{K/ \Q}(\gamma \theta^k)$ satisfies the recurrence
\[x(k+d) = s_1 x(k+d-1) +\cdots + s_d x(k).\]
However, it is possible that this recurrence is not minimal. For example, take $K = \Q(\sqrt{2}, \sqrt{3}, \sqrt{5})$, $\theta = \sqrt{2}+\sqrt{3}$ and $\gamma = \sqrt{5}$. Then, Proposition \ref{prop1} (a) implies that $x(k)$ satisfies an order 4 recurrence, but we can check that $x(k) = 0$ for $k = 0, 1, 2, 3$. So, $x(k)$ is a constant sequence, while $\deg \theta = 4$. \\

There does not appear to be a complete characterization for when the sequence $x(k)$ is exactly of order $\deg \theta$ in the current literature, so Proposition \ref{prop1} (b) gives a new result in this direction. We note that Proposition \ref{prop1} (a) follows from known results on generalized power sums (see Chapter 1 of \cite{recurrence}, for example), but we provide a more elementary proof below.
\end{rmk}

\begin{proof}[Proof of Proposition \ref{prop1}] Let $\tilde{K}$ denote the Galois closure of $K$, and let 
\[\Gal(\tilde{K}/ \Q) = \{\sigma_1, \dots, \sigma_n\}.\] 
Set $\gamma_i:=\sigma_i(\gamma)$ and $\theta_i:=\sigma_i(\theta)$, for $i \in \{1, \dots, n\}$. Then, we can write
\begin{equation}
\label{aneq} x(k)=\Tr_{K/ \Q}(\gamma \theta^k)=\sum_{i=1}^n \gamma_i \theta_i^k.
\end{equation}
Let 
\[f(X)=X^d -s_1X^{d-1} - \cdots - s_d\]
be the minimal polynomial of $\theta$ over $\Q$. Then,
\begin{align*}
\sum_{j=1}^d s_j x(k+d-j)	&= \sum_{j=1}^d \sum_{i=1}^n s_j \gamma_i \theta_i^{k+d-j} \,\,\, \text{ by (\ref{aneq})}\\
					&=\sum_{i=1}^n \gamma_i \theta_i^k \sum_{j=1}^d s_j \theta_i^{d-j} \\
					&=\sum_{i=1}^n \gamma_i \theta_i^k \theta_i^d,
\end{align*}
where the final equality follows because each $\theta_i$ is a root of $f(X)$. So, our sequence satisfies the recurrence $x(k+d)=\sum_{j=1}^d s_jx(k+d-j),$ which has characteristic polynomial equal to $f(X)$, which proves part (a). \\

Next, suppose that $x(k)$ satisfies an order $m$ recurrence for $0 < m \leq d$, say
\[x(k+m)= \sum_{j=1}^{m} r_j x(k+m-j),\]
where $r_j \in \Z$. Then, we have
\[\Tr_{K/ \Q}(\gamma \theta^{k+m})=\sum_{j=1}^m r_j \Tr_{K/ \Q}(\gamma \theta^{k+m-j}),\]
and by linearity of the trace, we get $\Tr_{K/ \Q}(C \theta^k \cdot \gamma)=0,$
where \[C=\theta^m-\sum_{i=1}^m r_i \theta^{m-i}.\] 
Order the embeddings so that $\sigma_1(\theta)=\theta_1, \dots, \sigma_d(\theta)=\theta_d$ are distinct, and $\sigma_i(\theta) = \sigma_{dm+i}(\theta)$ for $m =1, 2, \dots, \ell$. Then,
\begin{align*}
\Tr_{K/ \Q}(C \theta^k \cdot \gamma) &=\sigma_1(C\theta^k) \left(\sigma_1(\gamma) + \sigma_{d+1}(\gamma) +\cdots + \sigma_{\ell d + 1} (\gamma)\right) \\
&+ \sigma_2(C\theta^k)\left(\sigma_2(\gamma) + \sigma_{d+2}(\gamma)+\cdots + \sigma_{\ell d + 2} (\gamma) \right)\\
&\,\,\, \vdots \\
&+ \sigma_d(C\theta^k) \left(\sigma_d(\gamma) + \sigma_{2d}(\gamma) + \cdots + \sigma_{(\ell+1)d} (\gamma) \right)
\end{align*}
where $n = (\ell+1) d$. For $i =1, \dots, d$. Set
\[ S_i = \sigma_i(\gamma) + \sigma_{d+i}(\gamma) + \cdots + \sigma_{\ell d + i}(\gamma).\]
 Then, we can write
\begin{equation}
\label{matrices}
\begin{pmatrix} \sigma_1(C \theta^0) & \cdots & \sigma_d(C \theta^0) \\ \vdots & \ddots & \vdots \\ \sigma_1(C \theta^{d-1}) & \cdots & \sigma_d(C \theta^{d-1}) \end{pmatrix} 
\begin{pmatrix} S_1 \\ \vdots \\ S_d \end{pmatrix}= \begin{pmatrix} 0 \\ \vdots \\ 0 \end{pmatrix},\end{equation}
where $L_i=\Q(\theta_i)$ and $K_i=\sigma_i(K)$. Without loss of generality, suppose that $\sigma_1(\theta) = \theta$. Then, 
\[ S_1 = \Tr_{K / L}(\gamma), \]
where $L = \Q(\theta)$. If $C \not=0$ then the set $\{C, C\theta, \dots, C\theta^{d-1}\}$ would be $\Q$-linearly independent, and we would have
\[\det \begin{pmatrix} \sigma_1(C \theta^0) & \cdots & \sigma_n(C \theta^0) \\ \vdots & \ddots & \vdots \\ \sigma_1(C \theta^{d-1}) & \cdots & \sigma_d(C \theta^{d-1}) \end{pmatrix} =\disc(C, \theta, \dots, C\theta^{d-1})^{1/2} \not=0.\]
But this contradicts (\ref{matrices}), since $S_1 \not =0$ by assumption. So, we must have $C=0$, and so $\theta$ is a root of 
\[X^m-\sum_{i=1}^m r_i X^{m-i} \in \Z[X]\]
But since $\theta$ is degree $d$, and $m \leq d$ we get $m=d$. Hence, the recurrence
\[x(k+d)=\sum_{j=1}^d s_j x(k+d-j)\]
is minimal, and so $f(X)$ is the minimal polynomial of the sequence $x(k)$.
\end{proof} 

We have the following Corollary to Proposition \ref{prop1}.

\begin{cor} \label{cor1} Let $K$ be a number field and $M$ a full module in $K$. Suppose that $\epsilon$ is a nontorsion element in $\,\U_M^+$. For a fixed $\beta \in M$, let \[\alpha(k)=\beta \epsilon^k\] and $\{x_i(k): k \in \Z_{\geq 0}\}$ be a coordinate sequence of $\alpha(k)$ with respect to some basis, as defined in (\ref{xidef2}). Then, $\{x_i(k): k \in \Z_{\geq 0}\}$ is a linear recurrence sequence with characteristic polynomial equal to the minimal polynomial of $\epsilon$. Furthermore, if $\deg\epsilon=[K: \Q]$ then the minimal polynomial of this sequence is equal to the minimal polynomial of $\epsilon$.
\begin{proof}
Let $W=\{w_1, \dots, w_n\}$ be any basis for $M$, and write
\[\alpha(k)=x_1(k)w_1+\cdots+x_n(k)w_n.\]
Since $M$ is a full module, $W$ is a $\Q$-basis for $K$. So, there exists a dual basis $W^*=\{w_1^*, \dots, w_n^*\}$ to $W$ with respect to the trace pairing. That is, $W^*$ is a basis for $K$, and we have \[\Tr_{K/ \Q}(w_i^* w_j)=\delta_{ij}\]
for all $i, j$, where \[\delta_{ij} = \begin{cases} 1 & \text{ if } i =j \\ 0 & \text{ if } i \not=j. \end{cases}\] Let $\gamma=w_i^* \beta$. Then we have \[x_i(k)=\Tr_{K/ \Q}(\gamma \epsilon^k).\] Note that if $\deg \epsilon=[K: \Q]$ then $\Q(\epsilon)=K$. So \[\Tr_{\Q(\theta)}^K(\gamma)=\gamma\not=0.\] 
Hence, the result follows from Proposition \ref{prop1} (b).
\end{proof}
\end{cor}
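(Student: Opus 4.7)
The strategy is to reduce the corollary to Proposition \ref{prop1} by expressing each coordinate sequence $x_i(k)$ as a trace of the form $\Tr_{K/\Q}(\gamma \epsilon^k)$ for a suitable $\gamma \in K$. Once this representation is in hand, both claims follow immediately: part (a) of the proposition gives the characteristic polynomial assertion, and part (b) handles the case when $\epsilon$ generates $K$.

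To extract $x_i(k)$ as a trace, I would use the nondegeneracy of the trace pairing on $K$. Because $M$ is full in $K$, the basis $W = \{w_1, \dots, w_n\}$ of $M$ is also a $\Q$-basis of $K$, and since $K/\Q$ is separable the form $(x,y) \mapsto \Tr_{K/\Q}(xy)$ is nondegenerate. Thus there is a dual basis $W^* = \{w_1^*, \dots, w_n^*\}$ characterized by $\Tr_{K/\Q}(w_i^* w_j) = \delta_{ij}$. Pairing $\alpha(k) = \sum_j x_j(k) w_j$ against $w_i^*$ gives
\[ x_i(k) \;=\; \Tr_{K/\Q}\bigl(w_i^* \,\alpha(k)\bigr) \;=\; \Tr_{K/\Q}\bigl(\gamma_i \,\epsilon^k\bigr), \qquad \gamma_i := w_i^* \beta. \]
Proposition \ref{prop1}(a) then immediately gives that $x_i(k)$ satisfies a linear recurrence whose characteristic polynomial equals the minimal polynomial of $\epsilon$.

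For the second assertion, assume $\deg \epsilon = [K : \Q]$, so that $L := \Q(\epsilon) = K$ and hence $\Tr_{K/L}(\gamma_i) = \gamma_i$. This quantity is nonzero, since $w_i^*$ is a basis element of $K$ (so nonzero) and $\beta$ must be nonzero (else $\alpha(k)$ would vanish identically and there would be nothing to say). Proposition \ref{prop1}(b) therefore yields that the minimal polynomial of the sequence $\{x_i(k)\}$ coincides with the minimal polynomial of $\epsilon$.

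The only nontrivial conceptual step is the passage from coordinates to traces via the trace-dual basis; after that, the proof is essentially a direct invocation of Proposition \ref{prop1}, with no further computations needed. I do not anticipate any real obstacle beyond making sure the nondegeneracy of the trace form (hence the existence of $W^*$) is justified.
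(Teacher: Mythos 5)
Your proposal is correct and follows essentially the same route as the paper: pass to the trace-dual basis $W^*$, write $x_i(k)=\Tr_{K/\Q}(w_i^*\beta\,\epsilon^k)$, and invoke Proposition \ref{prop1}(a) and (b). The only difference is cosmetic — you explicitly note that $\beta\neq 0$ is needed for $\gamma_i\neq 0$, a point the paper leaves implicit.
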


%------------------- 3.Quadratic Extensions ---------------------------

\section{Norm Form Equations over Real Quadratic Fields}
\label{sec: 3}
Suppose that $K$ is a real quadratic field, and let $M$ be a full module in $K$. For any $\beta \in M$ and $\epsilon$ a nontorsion element in $\,\U_M^+$, let $\alpha(k)=\beta \epsilon^k$ as before. Since $\epsilon$ is degree 2 over $\Q$, Corollary \ref{cor1} implies that the coordinate sequences of $\alpha(k)$ are order 2 linear homogeneous recurrence sequences. Such sequences have been well-studied, and so Corollary \ref{cor1} implies some immediate consequences. First, we provide the relevant background.\\

Let $P, Q$ be nonzero coprime integers. Then, the \textit{Lucas sequence} with integer parameters $(P, Q)$ is the order 2 linear recurrence sequence $u_k$ with initial values $u_0=0$, $u_1=1,$ and recurrence
\[u_{k+2}=Pu_{k+1}-Qu_k.\]
For example, the Fibonacci sequence is the Lucas sequence with integer parameters $(1, -1)$. Let $\theta, \bar{\theta}$ be roots of the polynomial $X^2-PX-Q$. It is a short exercise to show that the terms of the Lucas sequence with integer parameters $(P, Q)$ satisfies the explicit formula
\[u_k=\frac{\theta^k-\bar{\theta}^k}{\theta-\bar{\theta}}.\]
Note that Lucas sequences are sometimes defined by the parameters $(\theta, \bar{\theta})$, rather than the integer parameters $(P, Q)$.\\

The following elementary Lemma is well-known. We provide a short proof for completeness. 
\begin{lem}\label{LucasDivisibility} Every Lucas sequence is a LDS. 
\begin{proof}
Let $P, Q$ be nonzero coprime integers, and consider the matrix
\[A=\begin{pmatrix} P & -Q \\ 1 & 0 \end{pmatrix}.\]
Observe that for any positive integer $k$, we have
\[A^k=\begin{pmatrix} u_{k+1} & -Q u_k \\ u_k & -Q u_{k-1} \end{pmatrix},\]
where $u_k$ is the Lucas sequence with integer parameters $(P, Q)$. Now, take any positive integers $m, n$. Then we have
\[A^{mn}=\begin{pmatrix} u_{m+1} & -Q u_m \\ u_m & -Qu_{m-1} \end{pmatrix}^n \equiv \begin{pmatrix} * & * \\ 0 & * \end{pmatrix} (\mod u_m).\]
On the other hand, we have
\[A^{mn}=\begin{pmatrix} u_{mn+1}  & -Q u_{mn} \\ u_{mn} & -Q u_{mn-1} \end{pmatrix}.\]
Comparing the lower left hand entries, we see that $u_m \mid u_{mn}$
for every $m, n \in \Z_{>0}$. So, $u_k$ is a LDS. 
\end{proof}
\end{lem}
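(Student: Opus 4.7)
The plan is to prove $u_m \mid u_{mn}$ by reducing the divisibility to a single polynomial identity, using the Binet-type closed form for Lucas sequences rather than manipulating the recurrence directly.

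First I would introduce the roots $\theta, \bar\theta$ of the characteristic polynomial of the recurrence, so that
\[
u_k = \frac{\theta^k - \bar\theta^k}{\theta - \bar\theta}.
\]
Fixing positive integers $m$ and $n$, I would then apply the elementary factorization
\[
x^n - y^n = (x - y)\sum_{i=0}^{n-1} x^{n-1-i}\, y^{i}
\]
with $x = \theta^m$ and $y = \bar\theta^m$. Substituting into the numerator of $u_{mn}$ and dividing both sides by $\theta - \bar\theta$ yields
\[
u_{mn} \;=\; u_m \cdot S_{m,n}, \qquad S_{m,n} \;:=\; \sum_{i=0}^{n-1} \theta^{m(n-1-i)} \bar\theta^{mi},
\]
as an identity inside the splitting field of the characteristic polynomial.

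The remaining task is to verify that the cofactor $S_{m,n}$ is a rational integer, not merely an algebraic number. The reindexing $i \mapsto n-1-i$ leaves the sum unchanged, so $S_{m,n}$ is symmetric in $\theta$ and $\bar\theta$; by the fundamental theorem on symmetric polynomials, it is therefore a polynomial with integer coefficients in $\theta + \bar\theta$ and $\theta\bar\theta$, both of which are rational integers obtained from the coefficients of the characteristic polynomial. Hence $S_{m,n} \in \Z$, and $u_m \mid u_{mn}$ for all positive integers $m, n$, which is precisely the LDS property.

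The only real obstacle I anticipate is cosmetic bookkeeping: being careful to keep the sign convention on $Q$ consistent between the recurrence and the characteristic polynomial, and making sure the factorization above is interpreted as an equality in the ring $\Z[\theta, \bar\theta]$ so that the symmetric-function step is legitimate. With that in place, no induction on $m$ or $n$ and no identity beyond the geometric-series factorization is needed.
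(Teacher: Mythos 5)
Your proof is correct, but it takes a genuinely different route from the paper. The paper works entirely with integer matrices: it records the identity $A^k=\left(\begin{smallmatrix} u_{k+1} & -Qu_k \\ u_k & -Qu_{k-1}\end{smallmatrix}\right)$ for $A=\left(\begin{smallmatrix} P & -Q \\ 1 & 0\end{smallmatrix}\right)$, writes $A^{mn}=(A^m)^n$, and reads off $u_m\mid u_{mn}$ by reducing the lower-left entry modulo $u_m$. You instead pass to the splitting field, factor $\theta^{mn}-\bar\theta^{mn}$ via the geometric-series identity, and certify that the cofactor $S_{m,n}=\sum_{i=0}^{n-1}\theta^{m(n-1-i)}\bar\theta^{mi}$ is a rational integer because it is a symmetric polynomial in $\theta,\bar\theta$ with integer coefficients and $\theta+\bar\theta=P$, $\theta\bar\theta=Q$ are integers. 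Both arguments are complete; yours has the advantage of producing an explicit closed form for $u_{mn}/u_m$, while the paper's stays in $\Z$ throughout and, more importantly for this paper, sets up the matrix identities that are reused later (Proposition \ref{prop} squares $A^n$ to get $u_{2n+1}=u_{n+1}^2-u_n^2$). Two minor bookkeeping points you already flagged: the characteristic polynomial of $u_{k+2}=Pu_{k+1}-Qu_k$ is $X^2-PX+Q$ (the paper's ``$X^2-PX-Q$'' is a sign slip), so the elementary symmetric functions are $P$ and $Q$ as you need; and the Binet form tacitly assumes $\theta\neq\bar\theta$, i.e.\ $P^2\neq 4Q$ --- the paper makes the same tacit assumption, and the excluded case $(P,Q)=(\pm 2,1)$ gives $u_k=k(\pm1)^{k-1}$, which is trivially a divisibility sequence, so nothing is lost.
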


In fact, an order 2 linear recurrence sequence $a(k)$ is a LDS if and only if $a(k)=c \cdot u_k$, where $c$ is a nonzero constant, and $u_k$ is any Lucas sequence (see Theorem 1 of \cite{he}, for example). So, given $\alpha(k)$ as in \ref{mainprop}, it is not possible to find a basis for $M$ so that $x_1(k)$ and $x_2(k)$ are LDS simultaneously.  \\

We have the following Proposition.

\begin{prop} \label{mainprop} Let $K$ be a real quadratic field and $M$ a full module in $K$. Fix an element $\beta \in M$ and write $\alpha(k)=\beta \epsilon^k$. Then, there is a choice of basis $W=\{w_1, w_2\}$ for $M$, which we construct explicitly, so that if we write
$\alpha(k)=x_1(k)w_1+x_2(k)w_2$
then the sequence $x_1(k)$ is a LDS. 
\end{prop}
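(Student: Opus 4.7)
The plan is to take $w_2$ to be a primitive generator of the rank-one sublattice $M\cap\Q\beta$, and then complete it to a $\Z$-basis $\{w_1,w_2\}$ of $M$. Concretely, I would fix any basis $\{a_1,a_2\}$ of $M$, write $\beta=m_1a_1+m_2a_2$ with $g=\gcd(m_1,m_2)$, and set $w_2=\beta/g\in M$; since $m_1/g$ and $m_2/g$ are coprime, Bezout produces an explicit $w_1\in M$ making $\{w_1,w_2\}$ a $\Z$-basis for $M$.

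To see why this choice gives a LDS, I would use the Cramer-style inversion
\[
x_1(k)\;=\;\frac{\bar w_2\,\alpha(k)-w_2\,\bar\alpha(k)}{w_1\bar w_2-w_2\bar w_1}
\]
and substitute $\alpha(k)=\beta\epsilon^k$. Writing $w_2=r\beta$ with $r\in\Q^\times$ gives $\bar w_2\beta=w_2\bar\beta=rN_K(\beta)\in\Q$, so the numerator collapses to $rN_K(\beta)(\epsilon^k-\bar\epsilon^k)$. The denominator also lies in $\Q\sqrt{d}$ (where $K=\Q(\sqrt{d})$), so after dividing and factoring $(\epsilon-\bar\epsilon)$ out of $\epsilon^k-\bar\epsilon^k$ I get
\[
x_1(k)=c\cdot u_k,\qquad u_k=\frac{\epsilon^k-\bar\epsilon^k}{\epsilon-\bar\epsilon},\qquad c\in\Q.
\]
Here $u_k$ is the Lucas sequence with integer parameters $(P,Q)=(\Tr_{K/\Q}\epsilon,N_K\epsilon)=(\Tr_{K/\Q}\epsilon,1)$, and the integer constant $c$ equals $x_1(1)$ (since $u_1=1$).

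The proof is completed by checking the remaining small hypotheses and invoking Lemma \ref{LucasDivisibility}. The Lucas parameters are coprime and nonzero: $Q=1$ is automatic, and $P=\epsilon+\bar\epsilon\neq 0$ because $\epsilon\bar\epsilon=1>0$ in a real quadratic field forces $\epsilon$ and $\bar\epsilon$ to share a sign. The constant $c$ is nonzero, since $c=0$ would put $\beta\epsilon=\alpha(1)\in\Z w_2\subset\Q\beta$ and force $\epsilon\in\Q$, contradicting nontorsion. Lemma \ref{LucasDivisibility} then yields that $u_k$ is a LDS, so $x_1(k)=cu_k$ is also a LDS. The one conceptual step in the argument is recognizing that the condition $w_2\in\Q\beta$ is precisely what makes the coefficients of $\epsilon^k$ and $\bar\epsilon^k$ in the numerator the same rational number, and hence makes $x_1(k)$ proportional to the Lucas difference $\epsilon^k-\bar\epsilon^k$; everything else is standard lattice bookkeeping.
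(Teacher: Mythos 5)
Your proposal is correct and follows essentially the same route as the paper: both arguments choose $w_2$ to span $M\cap\Q\beta$ (so that $x_1(0)=0$), deduce $x_1(k)=x_1(1)\,u_k$ for the Lucas sequence $u_k$ attached to $\epsilon$, and conclude via Lemma \ref{LucasDivisibility}. The only cosmetic differences are that you build the basis by a Bezout completion of the primitive vector $\beta/g$ rather than by triangularizing a change-of-basis matrix, and you rule out $x_1(1)=0$ directly (it would force $\epsilon\in\Q$) where the paper invokes Corollary \ref{cor1}; both are valid.
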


\begin{proof} By Lemma \ref{LucasDivisibility}, it suffices to find a basis $\{w_1, w_2\}$ for $M$ so that $x_1(0)=0$. Let $\{t_1, t_2\}$ be any basis for $M$, and $B$ be the matrix given by
\[\begin{pmatrix} \beta \\ \beta \epsilon \end{pmatrix}=B \begin{pmatrix} t_1 \\ t_2\end{pmatrix}.\]
Note that $\exists C \in \GL_2(\Z)$ so that $BC$ is lower triangular. So, we can define a new basis $\{v_1, v_2\}$ from $\{t_1, t_2\}$ by change of basis matrix $C^{-1}$. Then,
\begin{equation}\label{quadmatrix1}
 \begin{pmatrix} \beta \\ \beta \epsilon \end{pmatrix}=\begin{pmatrix} a_{11} & 0 \\ a_{21} & a_{22} \end{pmatrix} \begin{pmatrix} v_1 \\ v_2 \end{pmatrix},
\end{equation}
for some $a_{ij} \in \Z$. Now, let $W=\{w_1, w_2\}$ be the basis defined by
\[\begin{pmatrix} w_1 \\ w_2 \end{pmatrix} =\begin{pmatrix} 1 & 1 \\ 1 & 0 \end{pmatrix} \begin{pmatrix} v_1 \\ v_2\end{pmatrix}.\]
We claim that we can take $W$ as our desired basis. To see this, observe that
\[ \begin{pmatrix} 0 & a_{11} \\ a_{22} & a_{21}-a_{22} \end{pmatrix} \begin{pmatrix} w_1 \\ w_2 \end{pmatrix} =
\begin{pmatrix} a_{11} & 0 \\ a_{21} & a_{22} \end{pmatrix} \begin{pmatrix} v_1 \\ v_2 \end{pmatrix}.\]
So, if we write $\alpha(k)=x_1(k)w_1+x_2(k)w_2$ then by (\ref{quadmatrix1}) $x_1(k)$ has initial conditions $x_1(0)=0$ and $x_1(1)=a_{22}$. So, $x_1(k)=a_{22} u_k$, where $u_k$ is the Lucas sequence with parameters $(\epsilon, \bar{\epsilon})$. By Corollay \ref{cor1} we know that $x_1(k)$ is an order 2 recurrence sequence, and so we must have $a_{22}\not=0$. Hence, $x_1(k)$ is a LDS. 
\end{proof}

%------------------- 4. Biquadratic Extensions ---------------------------
\section{Norm Form Equations over Quartic Fields}
\label{sec: 4}
Let $K$ be a quartic field, and $M$ a full module in $K$. Choose a basis $\beta \in M$, and suppose there exists a unit $\eta \in \U_M^+$ of degree 4 over $\Q$. By Corollary \ref{cor1}, the coordinate sequences of $\alpha(k)=\beta \eta^k$ are order 4 linear recurrence sequences. Unlike in the order 2 case, much less is known about higher-order linear recurrence sequences, and so it is generally quite challenging to determine when an arbitrary order 4 linear recurrence sequence is a LDS. \\

Suppose that $\eta$ is a quartic unit with $\eta^2 = : \epsilon$ a unit in a quadratic subfield of $\Q(\eta)$. 
Recall, by Proposition \ref{kubota}, this is one of the three cases needed to understand solutions to norm forms over real biquadratic fields.  Let $\tilde{K}$ be the Galois closure of $K$. Observe that for $\sigma \in \Gal(\tilde{K}/ \Q)$ we have $\sigma(\epsilon) = \sigma(\eta)^2.$
So, the conjugates of $\eta$ are of the form 
\begin{equation} \label{conjugates}
\pm \sqrt{\epsilon}, \pm \sqrt{\bar{\epsilon}},
\end{equation} 
where $\bar{\epsilon}$ denotes the conjugate of $\epsilon$.
Since $\eta$ is of degree 4 over $\Q$, it has minimal polynomial 
\begin{equation} \label{quarticpoly}
f(X)=X^4-(\epsilon+\bar{\epsilon})X^2+1.
\end{equation}
So, Corollary \ref{cor1} implies that the coordinate sequences $x(k)$ of $\alpha(k)$ are order 4 linear recurrence sequences satisfying
\begin{equation}
\label{order4rec} x(k+4)=T x(k+2)-x(k),
\end{equation}
where $T = \epsilon+\bar{\epsilon}$. 
The following Proposition gives sufficient initial conditions for $x(k)$ to be a LDS, and will be used to prove our main results. 

\begin{prop} \label{prop} Let $x(k)$ be an order 4 linear recurrence sequence with initial conditions
$x(0)=0, \, x(1)=x(2)=a, \, x(3)=a(T+1),$
and recurrence $x(k+4)=Tx(k+2)-x(k)$, where $a$ and $T$ are nonzero integers. Then, $x(k)$ is a LDS. 
\begin{proof} Note that it suffices prove our claim for $a=1$. 
Let $u_k$ denote the Lucas sequence with integer parameters $(T, 1)$. Since we assumed that $x(0)=0$ and $x(2)=1$, we have $x(2n)=u_n$ for every $n \in \Z_{\geq 0}$. Consider the matrix
\[A=\begin{pmatrix} T & -1 \\ 1 & 0 \end{pmatrix}.\]
Recall from the proof of Lemma \ref{LucasDivisibility} that we have the identity
\begin{equation} \label{identity0} A^n=\begin{pmatrix} u_{n+1}	& - u_n \\ u_n & -u_{n-1} \end{pmatrix},
\end{equation}
and so we have
\begin{equation} \label{identity1}
A^n= \begin{pmatrix} x(2n+2) & -x(2n) \\ x(2n) & -x(2n-2) \end{pmatrix},
\end{equation}
for every $n \in \Z_{>0}$. Using the recurrence for $x(k)$, we observe that
\begin{equation} \label{identity2}
A^n \begin{pmatrix} x(3) \\ x(1) \end{pmatrix}=\begin{pmatrix} x(2n+3) \\ x(2n+1) \end{pmatrix}.
\end{equation}
Combining (\ref{identity1}) and (\ref{identity2}) yields
\[\begin{pmatrix} x(2n+3) \\ x(2n+1) \end{pmatrix} = \begin{pmatrix} x(3)x(2n+2) -x(1)x(2n) \\ x(3)x(2n)-x(1)x(2n-2) \end{pmatrix}.\]
That is, we have $x(2n+1)=x(3)x(2n)-x(1)x(2(n-1))$ for any positive integer $n$. Recalling that $x(1)=1,$ $x(3)=T+1$ and $x(2n)=u_n$, we obtain
\begin{align*}
x(2n+1)	&=(T+1)u_n-u_{n-1}\\
		& =u_{n+1} +u_n,
\end{align*}
where the final equality follows by using the recurrence for $u_k$. So, we have
\begin{equation}
\label{x1}
x(k)=\begin{cases} u_n, & \text{ if } k=2n \\ u_{n+1}+u_{n}, & \text{ if } k=2n+1, \end{cases}
\end{equation}
for any $k \in \Z_{\geq 0}$. 
Note that we need to show $x(k) \mid x(k\ell)$ for every $k, \ell \in \Z_{\geq 0}$. Suppose that $k=2n$. Then, $x(k)=u_n$ and $x(k\ell)=u_{n\ell}$. So, by Lemma \ref{LucasDivisibility} we have $x(k) \mid x(k \ell)$. Next, suppose that $k=2n+1$ and $\ell=2m$.
Noting that $A^{2n}=(A^n)^2$, and using identity (\ref{identity0}) we have
\[\begin{pmatrix} u_{2n+1} & -u_{2n} \\ u_{2n} & - u_{2n-1} \end{pmatrix}=\begin{pmatrix} u_{n+1} & -u_n \\ u_n & -u_{n-1} \end{pmatrix}^2.\]
After squaring the matrix on the right, we compare the upper left-hand entries to get the identity
$
u_{2n+1}=u^2_{n+1}-u_n^2.
$
So, we have
\begin{align*}
\frac{x(2k)}{x(k)}	&=\frac{x(2(2n+1))}{x(2n+1)}\\
			&=\frac{u_{2n+1}}{u_{n+1}+u_n}\\
			&=u_{n+1}- u_{n} \in \Z.
\end{align*}
Hence, $x(k) \mid x(2k)$, and by the previous case we have
\[x(2k) \mid x(2km) \Rightarrow x(k) \mid x(k\ell).\] 
Now, suppose that $k=2n+1$ and $\ell=2m+1$. Let $\epsilon, \bar{\epsilon}$ denote the roots of $X^2-TX+1$. Recall from Section 3 that we can write
\[u_k=\frac{\epsilon^k - \bar{\epsilon}^k}{\epsilon-\bar{\epsilon}},\]
for every $k \in \Z_{\geq 0}$. So, we have
\begin{align*}
x(2n+1)	& =  u_{n+1}+u_{n}\\
		&= \frac{\epsilon^{n+1}-\bar{\epsilon}^{n+1}}{\epsilon -\bar{\epsilon}} + \dfrac{\epsilon^n - \bar{\epsilon}^n}{\epsilon-\bar{\epsilon}}\\
		&= \frac{\epsilon^n(\epsilon+1)-\bar{\epsilon}^n(\bar{\epsilon}+1)}{\epsilon-\bar{\epsilon}}\\
		&= \frac{\epsilon^n(\epsilon+1)-\dfrac{1}{\epsilon^{n+1}} (1+\epsilon)}{\epsilon-\bar{\epsilon}}\\
		&=\frac{\epsilon+1}{\epsilon-\bar{\epsilon}} \cdot \frac{\epsilon^{2n+1}-1}{\epsilon^{n+1}}.
\end{align*}

This gives
\begin{align*}
\frac{x((2n+1)(2m+1))}{x(2n+1)}	&= \frac{x(2(2nm+n+m)+1)}{x(2n+1)}\\
						&= \frac{\epsilon^{2(2nm+n+m)+1}-1}{\epsilon^{2nm+n+m+1}} \cdot \frac{\epsilon^{n+1}}{\epsilon^{2n+1}-1}\\
						&=\frac{\epsilon^{(2n+1)(2m+1)}-1}{\epsilon^{2n+1}-1} \cdot \frac{1}{\epsilon^{m(2n+1)}}.
\end{align*}
To see this value is in $\Z$, let $\alpha=\epsilon^{2n+1}$. Then, from above we obtain
\begin{align*}
\frac{x((2n+1)(2m+1))}{x(2n+1)} &= \frac{\alpha^{2m+1}-1}{\alpha-1} \cdot \frac{1}{\alpha^m}\\
					&=\frac{\alpha^{2m}+\alpha^{2m-1}+\cdots+\alpha+1}{\alpha^m}\\
					&= (\alpha^m+\alpha^{-m})+\cdots+(\alpha+\alpha^{-1})+1.
\end{align*}
Since $\alpha=\epsilon^{2n+1}$ and $N_K(\epsilon)=1$, then $\alpha$ and $\alpha^{-1}$ are quadratic conjugates. So, we have
$\alpha^t+\alpha^{-t} \in \Z$
for every $t=1, \dots, m$. Hence, \[x(2n+1) \mid x((2n+1)(2m+1)),\] and so $x(k)$ is a LDS.
\end{proof}
\end{prop}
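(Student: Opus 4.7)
The plan is to first reduce to the case $a=1$ by linearity, and then exploit the key structural feature of the recurrence $x(k+4)=Tx(k+2)-x(k)$: it decouples entirely into an even-indexed subsequence and an odd-indexed subsequence, each of which satisfies the order-two recurrence $w(n+2)=Tw(n+1)-w(n)$. With the prescribed initial data, the even subsequence $y(n):=x(2n)$ has $y(0)=0, y(1)=1$, so $y(n)=u_n$ is exactly the Lucas sequence with integer parameters $(T,1)$. The odd subsequence $z(n):=x(2n+1)$ has $z(0)=1, z(1)=T+1$, and either a short induction against the recurrence or the general solution in terms of $\epsilon,\bar\epsilon$ identifies $z(n)=u_{n+1}+u_n$. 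Having rewritten $x(k)$ entirely in terms of the single Lucas sequence $u_n$, I would then split the verification of $x(k)\mid x(k\ell)$ into three cases according to the parity of $k$ and $\ell$.

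The case $k=2n$ even is immediate: $x(k\ell)=u_{n\ell}$, and $u_n\mid u_{n\ell}$ is precisely the Lucas divisibility of Lemma \ref{LucasDivisibility}. For $k$ odd and $\ell$ even, write $k=2n+1$; the classical identity $u_{2n+1}=u_{n+1}^2-u_n^2=(u_{n+1}-u_n)(u_{n+1}+u_n)$ shows that $x(2n+1)=u_{n+1}+u_n$ divides $u_{2n+1}=x(2k)$, after which Lucas divisibility propagates the divisibility to $x(2km)=x(k\ell)$. I expect the main obstacle to be the remaining case, with both $k=2n+1$ and $\ell=2m+1$ odd, since here neither side is a $u$-value and we cannot invoke Lemma \ref{LucasDivisibility} directly.

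To handle the odd-odd case, I would switch to the Binet formula $u_n=(\epsilon^n-\bar\epsilon^n)/(\epsilon-\bar\epsilon)$ with $\epsilon\bar\epsilon=1$, collapsing $u_{n+1}+u_n$ into the single closed form
\[
x(2n+1)=\frac{\epsilon+1}{\epsilon-\bar\epsilon}\cdot\frac{\epsilon^{2n+1}-1}{\epsilon^{n+1}}.
\]
Since $(2n+1)(2m+1)=2(2nm+n+m)+1$, applying the same formula at the larger index and setting $\alpha:=\epsilon^{2n+1}$ causes the $(\epsilon+1)/(\epsilon-\bar\epsilon)$ prefactor to cancel, reducing the ratio $x(k\ell)/x(k)$ to $(\alpha^{2m+1}-1)/(\alpha^m(\alpha-1))$. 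Summing the geometric series and regrouping, this equals $1+\sum_{t=1}^{m}(\alpha^t+\alpha^{-t})$. Because $\alpha$ is a unit of norm $1$ in the real quadratic field $\Q(\epsilon)$, its quadratic conjugate is $\alpha^{-1}$, so each $\alpha^t+\alpha^{-t}$ is a trace of an algebraic integer and lies in $\Z$, finishing the proof. The heart of the argument is this last cyclotomic-style collapse; everything else is routine book-keeping against the decoupling of the recurrence.
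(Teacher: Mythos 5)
Your proposal is correct and follows essentially the same route as the paper's proof: the same identification $x(2n)=u_n$, $x(2n+1)=u_{n+1}+u_n$, the same three-case parity analysis using $u_{2n+1}=u_{n+1}^2-u_n^2$ for the odd--even case, and the same Binet-formula collapse to $1+\sum_{t=1}^{m}(\alpha^t+\alpha^{-t})$ with $\alpha=\epsilon^{2n+1}$ for the odd--odd case. The only cosmetic difference is that you derive the odd-subsequence formula by induction on the decoupled order-two recurrence rather than via the paper's matrix identity.
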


Theorem \ref{thm1} will now follow from Proposition \ref{prop1}. Recall that $K$ is a quartic number field containing a quartic unit $\eta$ of positive norm so that $\eta^2$ is a unit in a quadratic subfield of $K$.

\subsection*{Proof of Theorem \ref{thm1}} Note that the module $M'=\beta\Z[\eta]$ has basis $\{\beta, \beta \eta, \beta \eta^2, \beta \eta^3\}$. Define the set $W=\{w_1, w_2, w_3, w_4\}$ by
\[\underbrace{\begin{pmatrix} 0 & 0 & 1 & 0 \\ 1 & 0 & 0 & 1 \\ 1 & 0 & 0 & 0 \\ T+1 & 1 & 0 & 0 \end{pmatrix}}_{A} \begin{pmatrix} w_1 \\ w_2 \\ w_3 \\ w_4 \end{pmatrix} = \begin{pmatrix} \beta \\ \beta \eta \\ \beta \eta^2 \\ \beta \eta^3 \end{pmatrix},\]
where $T = \epsilon+ \bar{\epsilon}$. 
Note that $A \in \GL_4(\Z)$, and so $W$ is a basis for $M$. 
Since $\eta$ has minimal polynomial \[f(X)=X^4-TX^2+1,\] 
then by Corollary \ref{cor1} we know that the sequence $x_1(k)$ is an order 4 linear recurrence sequence satisfying (\ref{order4rec}). Moreover, if we write $\alpha(k)$ in terms of the basis $W$, then 
\[x_1(0)=0, \,x_1(1)=x_2(1)=1, \text{ and } x_3(1)=\Tr_{K/ \Q}(\epsilon)+1.\]
So, by Proposition \ref{prop}, $x_1(k)$ is a LDS. \hfill $\qed$ \\

In the following Corollary, we provide explicit formulas for the coordinate sequences of $\alpha(k)$, with respect to the basis constructed in Theorem \ref{thm1}, in terms of Lucas sequences.

\begin{cor} Let $W=\{w_1, w_2, w_3, w_4\}$ be the basis for the module $\beta \Z[\eta]$ constructed in Theorem \ref{thm1}, and $\alpha(k)=\beta \eta^k$ be as above. If we write
\[\alpha(k)=x_1(k)w_1+\cdots+x_4(k)w_4,\]
then for any integer $k \geq 2$ we have\\
\[x_1(k) =\begin{cases} u_n & \text{ if } k=2n \\ u_{n+1}+u_n & \text{ if } k=2n+1, \end{cases}
\hspace{2em} x_2(k)=\begin{cases} 0 & \text{ if } k =2n \\ u_n & \text{ if } k=2n+1,\end{cases}\]
\vspace{0.2em}
\[\hspace{0.75em} x_3(k)=\begin{cases} - u_{n-1}  & \text{ if } k=2n \\ 0 & \text{ if } k=2n+1,\end{cases} 
\hspace{3.5em} x_4(k)=\begin{cases} 0 & \text{ if } k=2n \\ - u_{n-1} & \text{ if } k=2n+1, \end{cases}\]
\vspace{0.25em}

where $u_n$ is the Lucas sequence with parameters $(\epsilon, \bar{\epsilon})$, defined in Section \ref{sec: 2}. 

\begin{proof} Let $W$ be the basis constructed in the proof of Theorem \ref{thm1}, and write
\[\alpha(k)=x_1(k)w_1+\cdots+x_4(k)w_4.\]
Recall, by Corollary \ref{cor1} we know that all of the coordinate sequences $x_i(k)$ of $\alpha(k)$ satisfy the order 4 recurrence
\begin{equation}
\label{recurrenceagain}x_i(k+4)=T x_i(k+2)-x_i(k),
\end{equation}
where $T = \epsilon+\bar{\epsilon}$,
and by construction of our basis $W$ these sequences have initial conditions
\begin{center}
\begin{tabular}{c | c c c c}
$k$ 	& $x_1(k)$ 				& $x_2(k)$ 	& $x_3(k)$ 		& $x_4(k)$ 		\\\hline
0	& $0$					& $0$		& $1$			& $0$ 			\\
1	& $1$					& $0$		& $0$			& $1$ 			\\
2	& $1$					& $0$		& $0$			& $0$ 			\\
3	& $T+1$		& $1$		& $0$			& $0$
\end{tabular}
\end{center}

Let $\sigma_1, \dots, \sigma_4$ be the distinct embeddings $K \hookrightarrow \C$ fixing $\Q$, and let 
\[W^*=\{w_1^*, w_2^*, w_3^*, w_4^*\}\]
be a dual basis to $W$ with respect to the trace pairing on $K$. Recall from the proof of Corollary \ref{cor1} that we can write $x_i(k)=\Tr_{K/ \Q}(w_i^* \beta \eta^k)$. Also recall from (\ref{conjugates}) that the conjugates of $\eta$ are given by $\pm \sqrt{\epsilon}, \pm \sqrt{\bar{\epsilon}}$.  So, up to relabeling of the embeddings $\sigma_i$, we have
\begin{equation}
\label{explicit}
x_i(k)=(\gamma_{i1} +(-1)^k\gamma_{i2}) \sqrt{\epsilon}^k+(\gamma_{i3}+(-1)^k\gamma_{i4}) \sqrt{\bar{\epsilon}}^{\,k},
\end{equation}
for every $k \in \Z_{\geq 0}$, where $\gamma_{ij}=\sigma_j(w_i^*\beta)$. \\

From (\ref{x1}) in the proof of Proposition \ref{prop} we see that $x_1(k)$ satisfies the desired formula. Next, since $x_2(0)=x_2(2)=0$, then using the recurrence for $x_2(k)$ in (\ref{recurrenceagain}), we see that $x_2(2n)=0$ for every $n \in \Z_{\geq 0}$. From (\ref{explicit}), we have
\[x_2(2n+1)=(\gamma_{21}-\gamma_{22})\sqrt{\epsilon}^{2n+1}+(\gamma_{23}-\gamma_{24})\sqrt{\bar{\epsilon}}^{\, 2n+1}.\]
Since $x_2(1)=0$ and $N_L(\epsilon)=\epsilon \bar{\epsilon}=1$, we get
$\gamma_{23}-\gamma_{24}= -(\gamma_{21}-\gamma_{22}) \epsilon$. So,
\[x_2(2n+1)=(\gamma_{21} - \gamma_{22}) \left(\sqrt{\epsilon}^{2n+1} - \sqrt{\bar{\epsilon}}^{\, 2n-1}\right).\]
Using the equality above and the fact that $x_2(3)=1$, we have
\[\gamma_1-\gamma_2=\dfrac{1}{\sqrt{\epsilon}^3-\sqrt{\bar{\epsilon}}}\]
which implies that
\[x_2(2n+1)	=\frac{\sqrt{\epsilon}^{2n+1} - \sqrt{\bar{\epsilon}}^{\, 2n-1}}{\sqrt{\epsilon}^3-\sqrt{\bar{\epsilon}}}= \frac{\epsilon^n-\bar{\epsilon}^n}{\epsilon-\bar{\epsilon}},\]
and so $x_2(2n+1)=u_n$, which gives the desired formula for $x_2(k)$. \\

Next, since $x_3(1)=x_3(3)=0$, then using the recurrence for $x_3(k)$ in (\ref{recurrenceagain}), we see that $x_3(2n+1)=0$ for every $k \in \Z_{\geq 0}$. From (\ref{explicit}), we have
\[x_3(2n)=(\gamma_{31}+\gamma_{32}) \epsilon^n + (\gamma_{33}+\gamma_{34}) \bar{\epsilon}^{\, n}.\]
Since $x_3(2)=0$, we get
\[\gamma_{33}+\gamma_{34}=-(\gamma_{31}-\gamma_{32}) \epsilon^2\]
and so
$x_3(2n)=(\gamma_1+\gamma_2)(\epsilon^n - \bar{\epsilon}^{\,n-2}).$
Since $x_3(0)=1$ and $x_3(2)=0$, we have $x_3(4)=-1$, and so
\[\gamma_{31}+\gamma_{32}=\frac{-1}{\epsilon^2-1}.\]
So, as long as $n \geq 1$, we have
\[x_3(2n)=- \frac{\epsilon^n-\bar{\epsilon}^{\, n-2}}{\epsilon^2-1}=- \frac{\epsilon^{n-1} - \bar{\epsilon}^{\, n-1}}{\epsilon-\bar{\epsilon}}\]
and so $x_2(2n)=u_{n-1}$, which gives the desired formula for $x_2(k)$. The formula for $x_4(k)$ follows similarly to $x_2(k)$.
\end{proof}
\end{cor}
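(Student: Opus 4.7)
The plan is to exploit the fact, established by Corollary \ref{cor1} together with (\ref{order4rec}), that every coordinate sequence $x_i(k)$ satisfies the order-4 recurrence
\[x_i(k+4) = T\,x_i(k+2) - x_i(k),\]
in which only even-to-even and odd-to-odd shifts appear. This means the even-indexed subsequence $y_i(n) := x_i(2n)$ and the odd-indexed subsequence $z_i(n) := x_i(2n+1)$ each satisfy independently
\[y_i(n+2) = T\,y_i(n+1) - y_i(n), \qquad z_i(n+2) = T\,z_i(n+1) - z_i(n),\]
which is precisely the order-2 recurrence defining the Lucas sequence $u_n$ with parameters $(T,1) = (\epsilon+\bar\epsilon,\, \epsilon\bar\epsilon)$. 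So the entire problem collapses to a pair of second-order recurrences whose solution space is two-dimensional and spanned by shifts of $u_n$.

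Next, I would read off the initial data for each sequence from the matrix $A$ in the proof of Theorem \ref{thm1}, which forces
\[
\begin{array}{c|cccc}
k & x_1(k) & x_2(k) & x_3(k) & x_4(k) \\ \hline
0 & 0 & 0 & 1 & 0 \\
1 & 1 & 0 & 0 & 1 \\
2 & 1 & 0 & 0 & 0 \\
3 & T+1 & 1 & 0 & 0
\end{array}
\]
From here, each case is a direct pattern-match against $u_n$. For $x_1$, the even subsequence has $(y_1(0),y_1(1))=(0,1)$, so $y_1(n)=u_n$; the odd subsequence has $(z_1(0),z_1(1))=(1,T+1)=(u_1+u_0,\,u_2+u_1)$, so by uniqueness $z_1(n)=u_{n+1}+u_n$. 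For $x_2$, the even subsequence has both starting values $0$, hence vanishes, while $(z_2(0),z_2(1))=(0,1)$ gives $z_2(n)=u_n$. For $x_3$, the odd subsequence vanishes; the even subsequence starts $(1,0)$, which matches $-u_{n-1}$ (using the backward extension $u_{-1}=-1$, consistent with $u_{n+2}=Tu_{n+1}-u_n$). For $x_4$, the even subsequence vanishes and $(z_4(0),z_4(1))=(1,0)$ again matches $-u_{n-1}$.

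The main obstacle, such as it is, is simply being careful about the two mild bookkeeping points: first, verifying that the decoupled recurrence on even/odd indices really is the Lucas recurrence (which just requires noting that $(T,1)$ corresponds to the polynomial $X^2-TX+1$ whose roots are $\epsilon,\bar\epsilon$); and second, handling the case $n=0$ in the formulas for $x_3$ and $x_4$ cleanly — this is why the statement restricts to $k\geq 2$, since at $k=0$ one would need $-u_{-1}=1$ via the backwards extension of the Lucas sequence. Once these points are addressed, each of the four identities follows from the two-dimensional uniqueness of solutions to the order-2 recurrence together with matching two consecutive initial values.
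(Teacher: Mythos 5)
Your proof is correct, and for the coordinates $x_2, x_3, x_4$ it takes a genuinely different and more economical route than the paper. The paper handles $x_1$ by citing the formula already established inside the proof of Proposition \ref{prop} (itself obtained via the $2\times 2$ matrix identity for Lucas sequences), and then treats $x_2, x_3, x_4$ analytically: it writes $x_i(k)=\Tr_{K/\Q}(w_i^*\beta\eta^k)$ using the dual basis, expands this as $(\gamma_{i1}+(-1)^k\gamma_{i2})\sqrt{\epsilon}^{\,k}+(\gamma_{i3}+(-1)^k\gamma_{i4})\sqrt{\bar\epsilon}^{\,k}$ via the conjugates $\pm\sqrt{\epsilon},\pm\sqrt{\bar\epsilon}$, solves for the $\gamma$'s from the initial conditions, and simplifies the resulting expressions back into the Binet form of $u_n$. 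Your observation that the recurrence $x_i(k+4)=Tx_i(k+2)-x_i(k)$ involves only shifts by $2$, so that the even- and odd-indexed subsequences decouple into solutions of the single order-$2$ Lucas recurrence $y(n+2)=Ty(n+1)-y(n)$, reduces all four identities to matching two consecutive initial values against sequences ($u_n$, $u_{n+1}+u_n$, $-u_{n-1}$, $0$) that visibly satisfy the same recurrence; this bypasses the dual basis, the explicit conjugate expansion, and the $\sqrt{\epsilon}$-manipulations entirely, and treats $x_1$ uniformly with the others rather than importing it from Proposition \ref{prop}. The one point you rightly flag is the $n=0$ boundary for the $-u_{n-1}$ formulas: either invoke the backward extension $u_{-1}=-1$ (consistent with the recurrence), or simply anchor the induction at $n=1,2$ using $x_3(4)=Tx_3(2)-x_3(0)=-1=-u_1$, which is all that is needed since the statement only asserts the formulas for $k\geq 2$. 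Your approach buys brevity and also sidesteps the need to verify the nonvanishing of the various $\gamma$-coefficients; the paper's approach yields the explicit Binet-type expressions for each coordinate as a by-product, which can be useful elsewhere.
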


\begin{rmk} Let $M$ be an arbitrary full module in our quartic field $K$ and let $\alpha(k)=\beta \eta^k$ as above. Note that $M'=\beta \Z[\eta]$ is a finite index submodule of $M$ containing $\alpha(k)$ for every $k \in \Z_{\geq 0}$. So, we can always write the coordinate sequences for $\alpha(k)$ in terms of the basis constructed in Theorem \ref{thm1}. It turns out to be more challenging to apply Proposition \ref{prop} to find a basis for the entire module $M$. The following Proposition provides a characterization for when this can be done. \end{rmk}

First, we set some notation. For a basis $\{t_1, t_2, t_3, t_4\}$ of $M$, write
\begin{equation}
\label{B}
\begin{pmatrix} \beta \\ \beta \eta \\ \beta \eta^2 \\ \beta\eta^3 \end{pmatrix} = B \begin{pmatrix} t_1 \\ t_2 \\ t_3 \\ t_4 \end{pmatrix}.
\end{equation}
Note that $\exists X, Y \in \GL_4(\Z)$ so that $XBY=\diag(\delta_1, \dots, \delta_4)$ with $\delta_1 \mid \cdots \mid \delta_4$. Let $X=(x_{ij})$. Then, we have the following.

\begin{prop}\label{quarticprop} There is a choice of basis $W$ for the module $M$ so that the coordinate sequence $x_1(k)$ of $\alpha(k)$ with respect to the basis $W$ satisfies the initial conditions of Proposition \ref{prop} if and only if 
\[\gcd\left(\chi_4, \frac{\delta_4}{\delta_1}\right)=1,\]
where $\chi_i=x_{i2}+x_{i3}+(T+1)x_{i4}$, for $T = \epsilon + \bar{\epsilon}$. 

\begin{proof} Suppose that we have a basis $W=\{w_1, w_2, w_3, w_4\}$ for $M$ as above. Set
\[\vec{w}=\begin{pmatrix} w_1 & \cdots & w_4 \end{pmatrix}^{\top} \text{and }\, \vec{t}= \begin{pmatrix} t_1 & \cdots & t_4 \end{pmatrix}^{\top}.\]
Then, $A \vec{w}=B\vec{t}$, where $B$ is defined in (\ref{B}) and $A$ is a matrix with first column
\[\begin{pmatrix} 0 & a & a & a (T+1) \end{pmatrix}^{\top}.\] 
Write $D=\diag(\delta_1, \dots, \delta_4)$. Then, $D^{-1}XA \vec{w}=Y^{-1}\vec{t}$. Since $Y \in \GL_4(\Z)$, and $\vec{w}$ is a basis for $M$, we must have $C:=D^{-1}XA \in \GL_4(\Z)$. Observe that the first column of $C$ is of the form
\[ \begin{pmatrix} \frac{a}{\delta_1} \chi_1 & \frac{a}{\delta_2} \chi_2 & \frac{a}{\delta_3} \chi_3 & \frac{a}{\delta_4} \chi_4 \end{pmatrix}^{\top}.\]
Since $C \in \GL_4(\Z)$ the entries of this column must be relatively prime. In particular, this implies $a=\delta_4$ and $\gcd(\chi_4, \delta_4/\delta_1)=1$. Conversely, suppose we have $\gcd(\chi_4, \delta_4/\delta_1)=1$. Observe that $\gcd(\chi_1, \dots, \chi_4)=1$, since if there were a prime $p$ dividing every $\chi_i$, then we would have 
\[ p \cdot \begin{pmatrix} q_1 \\ \vdots \\ q_4 \end{pmatrix} = 0 \cdot \begin{pmatrix} x_{11} \\ \vdots \\ x_{41} \end{pmatrix}+
\begin{pmatrix} x_{12} \\ \vdots \\ x_{42} \end{pmatrix}+
 \begin{pmatrix} x_{13} \\ \vdots \\ x_{43} \end{pmatrix}+
(T+1) \begin{pmatrix} x_{14} \\ \vdots \\ x_{44} \end{pmatrix},\]
where $q_i \in \Z$. But then the columns of $X$ would be $(\Z/p)$-linearly dependent, which contradicts the fact that $X \in \GL_4(\Z)$. Now, let 
\[\bar{c}_1 = \begin{pmatrix} \frac{\delta_4}{\delta_1} \chi_1 & \frac{\delta_4}{\delta_2}\chi_2 & \frac{\delta_4}{\delta_3}\chi_3 & \chi_4 \end{pmatrix}^{\top}.\]
A standard result in Geometry of Numbers tells us that a lattice element can be lifted to a basis precisely when it is primitive (see Chapter 1 of \cite{cassels}, for example). Since $\delta_1 \mid \cdots \mid \delta_4$, and we've assumed that $\gcd(\chi_4, \delta_4/\delta_1)=1$, then we have
\[\gcd\left(\frac{\delta_4}{\delta_1} \chi_1, \frac{\delta_4}{\delta_2}\chi_2, \frac{\delta_3}{\delta_2} \chi_3, \chi_4\right)=1.\]
So, there is a matrix $C \in \GL_4(\Z)$ with first column equal to $\vec{c}_1$.
Next, let $A=X^{-1} D C$. Then, $A$ has first column
\[\vec{a}_1=\begin{pmatrix} 0 & \delta_4 & \delta_4 & \delta_4 (T+1) \end{pmatrix}^{\top}.\]
Furthermore, $XD^{-1}A =C \in \GL_4(\Z)$. Let $Z=YD^{-1} X A \in \GL_4(\Z)$, and define a new basis $W=\{w_1, w_2, w_3, w_4\}$ from $\{t_1, t_2, t_3, t_4\}$ by change of basis matrix $Z^{-1}$. 
Since $Z=B^{-1} A$, we have
\[A \begin{pmatrix} w_1 \\ \vdots \\ w_4 \end{pmatrix} = \begin{pmatrix} \beta \\ \vdots \\ \beta \eta^3 \end{pmatrix}.\]
So, if we write $\alpha(k)=x_1(k)w_1+\cdots+x_4(k)w_4$, then $x_1(k)$ satisfies the initial conditions
$x_1(0)=0, \, x_1(1)=x_1(2)=\delta_4, \, x_1(3)=\delta_4(T+1).$ 
\end{proof}
\end{prop}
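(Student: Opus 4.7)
The plan is to reduce the existence of a basis $W$ with the prescribed initial conditions to a question about primitive integer vectors, which is then resolved via the Smith decomposition $XBY=D$. A basis $W$ of $M$ satisfying $x_1(0)=0,\ x_1(1)=x_1(2)=a,\ x_1(3)=a(T+1)$ corresponds exactly to a matrix $A\in M_4(\Z)$ whose first column is $(0,a,a,a(T+1))^\top$ for some nonzero $a$, together with a change of basis $Z\in\GL_4(\Z)$ satisfying $A\vec w=B\vec t$ and $\vec w=Z\vec t$. Substituting $B=X^{-1}DY^{-1}$ and setting $C:=D^{-1}XA$, the requirement $Z\in\GL_4(\Z)$ becomes $C\in\GL_4(\Z)$ (since $Y$ is unimodular), and a direct computation shows that the first column of $C$ has $i$-th entry $a\chi_i/\delta_i$. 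So the proposition reduces to deciding when this column can be realized as a primitive integer vector for some nonzero $a$.

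Before handling either implication, I would establish the auxiliary fact $\gcd(\chi_1,\chi_2,\chi_3,\chi_4)=1$: the vector $(\chi_1,\chi_2,\chi_3,\chi_4)^\top$ is the $\Z$-linear combination of the columns of $X$ with weights $(0,1,1,T+1)$, so a common prime divisor $p$ of every $\chi_i$ would produce a nontrivial dependence of these columns modulo $p$, contradicting $X\in\GL_4(\Z)$. For the forward implication, I would then work one prime $p$ at a time. Setting $d_i=v_p(\delta_i)$ and $f_i=v_p(\chi_i)$, integrality of the entries $a\chi_i/\delta_i$ forces $v_p(a)\geq d_i-f_i$ for every $i$, while primitivity forces equality for some $i$, pinning down $v_p(a)=\max_i(d_i-f_i)$; combined with the trivial bound $v_p(a)\leq d_4$ and the auxiliary fact, this determines $a=\delta_4$ up to sign and shows that whenever $p\mid\chi_4$ one must have $d_1=d_4$, which is precisely $\gcd(\chi_4,\delta_4/\delta_1)=1$.

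For the reverse implication, set $a=\delta_4$ and form the candidate first column $\vec c_1=\bigl((\delta_4/\delta_1)\chi_1,\,(\delta_4/\delta_2)\chi_2,\,(\delta_4/\delta_3)\chi_3,\,\chi_4\bigr)^\top$. To check $\vec c_1$ is primitive, suppose a prime $p$ divides each of its entries: then $p\mid\chi_4$, so by hypothesis $p\nmid\delta_4/\delta_1$, and hence $p\nmid\delta_4/\delta_i$ for every $i$ (since $\delta_1\mid\delta_i$), forcing $p\mid\chi_i$ for every $i$ and contradicting the auxiliary fact. A standard geometry-of-numbers result (Chapter 1 of \cite{cassels}) then extends $\vec c_1$ to a matrix $C\in\GL_4(\Z)$. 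Setting $A:=X^{-1}DC$ and using the identity $X(0,1,1,T+1)^\top=(\chi_1,\chi_2,\chi_3,\chi_4)^\top$ (which is just the definition of $\chi_i$), the first column of $A$ evaluates to $(0,\delta_4,\delta_4,\delta_4(T+1))^\top$. Finally, $Z:=YD^{-1}XA\in\GL_4(\Z)$ produces the change of basis $\vec w=Z\vec t$ that realizes the sought basis $W$.

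The main obstacle I anticipate is in the forward direction: extracting the clean divisibility condition $\gcd(\chi_4,\delta_4/\delta_1)=1$ from the raw integrality and primitivity of $(a\chi_i/\delta_i)$ requires the prime-by-prime bookkeeping sketched above, and it is in that step that the normalization $a=\delta_4$ must be carefully isolated. The reverse direction is comparatively formulaic, amounting to a gcd check plus an appeal to the primitive-vector lemma.
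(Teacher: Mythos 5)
Your proposal follows the paper's proof essentially step for step: the same reduction of the problem, via the Smith decomposition $XBY=D$, to the primitivity of the integer vector $\left(a\chi_1/\delta_1,\dots,a\chi_4/\delta_4\right)^{\top}$; the same auxiliary fact $\gcd(\chi_1,\dots,\chi_4)=1$ proved by reducing the columns of $X$ modulo $p$; and the same construction in the reverse direction (take $a=\delta_4$, check the candidate first column is primitive, extend it to $C\in\GL_4(\Z)$, and set $A=X^{-1}DC$, $Z=YD^{-1}XA$). The reverse implication, which is the only direction actually used in the proof of Theorem \ref{thm2}, is correct as you present it.

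The forward implication is where there is a genuine gap, and your ``prime-by-prime bookkeeping'' does not close it. From integrality and primitivity of the column you correctly get $v_p(a)=\max_i(d_i-f_i)$ for every prime $p$, where $d_i=v_p(\delta_i)$ and $f_i=v_p(\chi_i)$; but this maximum need not equal $d_4$, so the conclusion ``$a=\delta_4$ up to sign, and $p\mid\chi_4$ forces $d_1=d_4$'' is a non sequitur. Concretely, if at some prime $p$ one has $(d_1,d_2,d_3,d_4)=(0,0,0,2)$ and $(f_1,f_2,f_3,f_4)=(0,0,0,1)$, then $v_p(a)=1$ makes the column $p$-integral and $p$-primitive even though $p$ divides $\gcd(\chi_4,\delta_4/\delta_1)$; more is true, since choosing $v_p(a)=\max_i(d_i-f_i)$ at every prime always yields a nonzero $a$ making the column primitive and integral (the auxiliary fact guarantees $\max_i(d_i-f_i)\geq 0$, with equality for all but finitely many $p$), so the argument as written would show the desired basis always exists, with no condition on $\chi_4$ at all. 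To be fair, the paper's own proof is equally terse at exactly this point --- it simply asserts that coprimality of the column entries ``implies $a=\delta_4$ and $\gcd(\chi_4,\delta_4/\delta_1)=1$'' --- so you have faithfully reproduced the published argument; but the necessity direction is not established by the divisibility data you (and the paper) invoke, and if it holds it must exploit further structure of $X$, $B$ and the $\chi_i$.
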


Theorem \ref{thm2} provides a family of modules satisfying the conditions of Proposition \ref{quarticprop}. An interesting future direction could be to provide a characterization of all such modules.

\subsection*{Proof of Theorem \ref{thm2}} Recall that $M=\Z[\sqrt{m}, \sqrt{n}]$ with $m=n+1$, and $\eta=\sqrt{m}+\sqrt{n}$. 
Observe that $\eta=\sqrt{\epsilon}$, where $\epsilon=m+n+2\sqrt{mn}$. Let $K=\Q(\sqrt{m}, \sqrt{n})$ with $m, n$ as above, and $L=\Q(\sqrt{mn})$.  A short computation shows that $N_L(\epsilon)=1$ and $\eta \in \U_M^+$. Next, observe that
\[\begin{pmatrix} 1 \\ \eta \\ \eta^2 \\ \eta^3 \end{pmatrix}=\underbrace{\begin{pmatrix} 1 & 0 & 0 & 0 \\ 0 & 1 & 1 & 0 \\ 2m+1 & 0 & 0 & 2 \\ 0 & 4m+3 & 4m+1 & 0 \end{pmatrix}}_{B} \begin{pmatrix} 1 \\ \sqrt{m} \\ \sqrt{n} \\ \sqrt{mn} \end{pmatrix}.\]
We can compute $XBY=\diag(1, 1, 2, 2)$ where
\[X=\begin{pmatrix} 1 & 0 & 0 & 0 \\ 0 & 1 & 0 & 0 \\ 0 & -4m-1 & 0 & 1 \\ -2m-1 & 0 & 1 & 0 \end{pmatrix},
\text{ and }
Y= \begin{pmatrix} 1 & 0 & 0 & 0 \\ 0 & 0 & 1 & 0 \\ 0 & 1 & -1 & 0 \\ 0 & 0 & 0 & 1 \end{pmatrix}.
\]
Hence, $\chi_4=1$ and so Proposition \ref{quarticprop} applies. That is, there is a basis $W$ so that the coordinate sequence $x_1(k)$ of $\alpha(k)$ with respect to the basis $W$ satisfies the initial conditions of Proposition \ref{prop}. So, $x_1(k)$ is a LDS.  \hfill \qed 

\begin{rmk} Note that the proof of Proposition \ref{quarticprop} provides an algorithm for computing our desired basis in Theorem \ref{thm2} explicitly. We demonstrate this computation. Note that \[\Tr_{K/ \Q}(\epsilon)=2(m+n)=2m+2,\] where we've used the assumption that $n=m+1$. So, we need to find a matrix $C \in \GL_4(\Z)$ with first column
$\vec{c}_1= \begin{pmatrix} 0 & 2 & 4(1-m) & 1 \end{pmatrix}^{\top}.$
For example, we can take
\[C=\begin{pmatrix} 0 & 0 & 1 & 0 \\ 2 & 0 & 0 & 1 \\ 2(1-m) & 1 & 0 & 0\\ 1 & 0 & 0 & 0 \end{pmatrix}. \]
Then, we compute $A=X^{-1} D C$, where $D=\diag(1, 1, 2, 2)$, to get
\[A=\begin{pmatrix} 0 & 0 & 1 & 0 \\ 2 & 0 & 0 & 1 \\ 2 & 0 & 2m+1 & 0 \\ 2(2m+3) & 2 & 0 & 4m+1 \end{pmatrix}.\]
So, setting $Z=B^{-1} A$, and using $Z^{-1}$ as our change of basis matrix from $\{1, \sqrt{m}, \sqrt{n}, \sqrt{mn}\}$ we obtain basis $W=\{w_1, w_2, w_3, w_4\}$ for $M$ given by
\[w_1=\sqrt{mn}, \,\, w_2=\sqrt{m}+2(m-1)\sqrt{mn},\]
\[w_3=1, \,\, w_4=\sqrt{m}+\sqrt{n}-2\sqrt{mn}.\]
So, if we write $\eta^k=x_1(k)w_1+\cdots+x_4(k)w_4$, we can check that $x_1(k)$ satisfies the initial conditions
$x_1(0)=0, \,\, x_1(1)=x_1(2)=2, \,\, x_1(3)=2(2m+3),$
and so by Proposition \ref{prop} we have that $x_1(k)$ is a LDS.
\end{rmk}

\begin{rmk} If $\alpha(k)$ is as in Theorems \ref{thm1} and \ref{thm2}, the coordinate sequences $\{x_i(k): k \in \Z_{\geq 0}\}$ contain order two subsequences $\{x_i(2k): k \in \Z_{\geq 0}\}$. So, as discussed in Section 3, it is not possible to find a basis for the corresponding module that makes $x_1(k), x_2(k), x_3(k), x_4(k)$ LDS simultaneously. 
\end{rmk}

%------------------- 5. Powers of Alg Ints ---------------------------

\section{Powers of Algebraic Integers}
\label{sec: 5}

We conclude this paper by discussing a related sequence studied by Silverman in \cite{algints}, and show how methods from the previous sections might be used in its analysis.\\

Given $\alpha \in \bar{\Z}$, define the sequence
\begin{equation}
\label{dkdef}
d_k(\alpha) = \max\{ d \in \Z \mid \alpha^k \equiv 1 (\mod d)\}.
\end{equation}
where the congruence $\alpha^k \equiv 1 (\mod d)$ means that there is an element $\beta \in \bar{\Z}$ with  \[\alpha^k = 1+d\beta.\] 
 In \cite{algints}, Silverman proved that $d_k(\alpha)$ is a divisibility sequence, and showed that, except for some exceptional cases, this sequence grows slower than exponentially. We record this Theorem below. 

\begin{thm}[\cite{algints} Theorem 1] \label{growth} Let $\alpha \in \bar{\Z}$. Then, 
\[\lim_{n \to \infty} \frac{\log d_n(\alpha)}{n} = 0\]
unless $\alpha^\ell \in \Z$ for some $\ell \in \Z$, or $\alpha^\ell$ is a unit in a quadratic extension of $\Q$.
\end{thm}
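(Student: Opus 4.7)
The plan is to control $d_k(\alpha)$ by working prime-by-prime in $K=\Q(\alpha)$, combining an easy global upper bound with uniform $p$-adic lower bounds on linear forms in logarithms.

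First I would reduce to the non-trivial case: if $\alpha$ is a root of unity or lies in $\Q$ we are already in an exceptional case, so assume $\alpha$ has infinite order and $n:=[K:\Q]\geq 2$. Since $\O_K$ is the integral closure of $\Z$ in $K$, the divisibility $d\mid(\alpha^k-1)$ in $\bar\Z$ is equivalent to the same divisibility in $\O_K$, which yields the valuation formula
\[
v_p(d_k(\alpha))=\min_{\mathfrak p\mid p}\left\lfloor\frac{v_\mathfrak p(\alpha^k-1)}{e_{\mathfrak p/p}}\right\rfloor.
\]
Only primes $\mathfrak p$ at which $\alpha$ is a unit and the multiplicative order $t_\mathfrak p$ of $\alpha \bmod \mathfrak p$ divides $k$ can contribute. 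The naive bound $d_k(\alpha)^n\leq|N_{K/\Q}(\alpha^k-1)|$ only gives $\log d_k(\alpha)=O(k)$, so a finer estimate is required.

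Next, for each contributing $\mathfrak p$, writing $k=t_\mathfrak p m$, the $p$-adic logarithm identity (``lifting the exponent'') combined with Yu's $p$-adic analogue of Baker's theorem should yield an estimate of the form
\[
v_\mathfrak p(\alpha^k-1)=v_\mathfrak p(\alpha^{t_\mathfrak p}-1)+v_\mathfrak p(m)+O_\alpha(\log k),
\]
uniformly in $\mathfrak p$. I would then decompose the sum for $\log d_k(\alpha)$ into three pieces: a ``base'' term coming from $v_\mathfrak p(\alpha^{t_\mathfrak p}-1)$, an ``index'' term bounded by $\log k$, and a Baker error controlled by a count of primes with $t_\mathfrak p\leq k$ via a Bang--Zsygmondy-type bound on the number of prime ideals of $\O_K$ with given multiplicative order. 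The crucial point is that the minimum over all $\mathfrak p\mid p$ in the valuation formula collapses most of the base contribution, ultimately producing $\log d_k(\alpha)=o(k)$.

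Finally I would verify that both exceptional cases are genuinely exceptional. If $\alpha^\ell\in\Z$, then $d_{k\ell}(\alpha)\geq|\alpha^{k\ell}-1|$ along the subsequence of multiples of $\ell$, and this grows exponentially. If $\alpha^\ell$ is a unit in a quadratic extension $F$, then $\alpha^{k\ell}-1\in\O_F$ has bounded quadratic norm and decomposes into a Lucas-type integer, allowing a large rational divisor to appear along the same subsequence. The hardest step will be establishing the uniformity of the $p$-adic Baker estimate across all residue characteristics and controlling the combinatorial sum so that contributions from different primes do not conspire against the minimum in the valuation formula; it is precisely this conspiracy that occurs in the two exceptional cases above.
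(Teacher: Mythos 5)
This statement is imported: the paper quotes it as Theorem~1 of \cite{algints} and gives no proof, so your proposal can only be measured against Silverman's argument. Your framework is sound as far as it goes --- the formula $v_p(d_k(\alpha))=\min_{\mathfrak p\mid p}\lfloor v_{\mathfrak p}(\alpha^k-1)/e_{\mathfrak p/p}\rfloor$ is correct, the trivial norm bound indeed only gives $O(k)$, and your verification that the two excluded cases genuinely produce exponential growth is the easy half. The gap sits exactly at the point you yourself flag as ``the crucial point'': you assert that the minimum over $\mathfrak p\mid p$ collapses the base contribution $\sum_p \min_{\mathfrak p\mid p} v_{\mathfrak p}(\alpha^{t_{\mathfrak p}}-1)\log p$ to $o(k)$, but you supply no mechanism, and the tool you name (Yu's $p$-adic analogue of Baker's theorem) cannot supply one. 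Lifting the exponent already gives $v_{\mathfrak p}(\alpha^k-1)=v_{\mathfrak p}(\alpha^{t_{\mathfrak p}}-1)+v_{\mathfrak p}(k/t_{\mathfrak p})$ exactly for odd unramified $\mathfrak p$, so Baker buys you nothing there; where a nontrivial bound is needed is on the \emph{common} part of the elements $\sigma\alpha^k-1$ over the distinct conjugates $\sigma\alpha$. A Baker--Yu estimate at a single prime $p$ carries a constant growing at least linearly in $p$, while the primes dividing $\alpha^k-1$ can be as large as $e^{O(k)}$ and number roughly $k/\log k$; summing those estimates gives a bound far exceeding $k$, not $o(k)$. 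Indeed even the model problem $\log\gcd(a^k-1,b^k-1)=o(k)$ for multiplicatively independent integers $a,b\ge 2$ has no known effective (Baker-type) proof.

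The engine of the actual proof is the ineffective gcd theorem of Bugeaud--Corvaja--Zannier and Corvaja--Zannier, resting on the Schmidt subspace theorem: if $u,v$ are multiplicatively independent $S$-units, then $\log N_{K/\Q}\bigl(\gcd(u^k-1,v^k-1)\bigr)=o(k)$. Since $d_k(\alpha)$ is a rational integer, it divides $\sigma\alpha^k-1$ for \emph{every} embedding $\sigma$, so Silverman applies the gcd theorem to pairs of conjugates $(\sigma\alpha,\tau\alpha)$; the dichotomy in the statement is precisely the dichotomy of multiplicative dependence among conjugates, since if every pair is multiplicatively dependent one is forced into $\alpha^\ell\in\Z$ or $\sigma\alpha^\ell=\alpha^{-\ell}$ with $\alpha^\ell$ a quadratic unit. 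Your prime-by-prime decomposition never isolates this dependence/independence alternative, which is why it can only describe the exceptional ``conspiracy'' after the fact rather than rule it out; to repair the argument you would need to replace the Baker step with the subspace-theorem input (at the cost of effectivity).
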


In the exceptional case where $K = \Q(\alpha)$ is a quadratic number field with $N_K(\alpha) =1$, it is shown in Theorem 7 of \cite{algints} that $d_k: = d_k(\alpha)$ satisfies the order 4 linear recurrence
\begin{equation} \label{dkrecurrence}
d_{k+4} = T d_{k+2}- d_k,
\end{equation}
where $T=\alpha + \bar{\alpha}$.\\

Suppose that $\alpha$ is an element in a real quadratic field and let $\eta$ be an algebraic number with $\eta^2 = \alpha$. Choose an integral basis $\{w_1, w_2, w_3, w_4\}$ for $L = \Q(\eta)$, and write
\begin{equation}
\label{xidef5millionthtime} \eta^k = x_1(k)w_1+\cdots+x_4(k)w_4.
\end{equation}
Then, by Corollary \ref{cor1} we have that the $x_i(k)$ also satisfy the order 4 linear recurrence 
\begin{equation}
\label{xirecurrence}
x_i(k+4)=Tx_i(k+2)-x_i(k).
\end{equation}
We have the following observation. 

\begin{prop} \label{c} Suppose that $\alpha$ is a unit of positive norm in a real quadratic field, and let $\eta$ be any algebraic integer satisfying $\eta ^2 = \alpha$. Let $L = \Q(\eta)$. 
If $\O_L = \Z[\eta]$, then there exists a choice of basis for $\O_L$ so that
\[
x_1(k)=\begin{cases} d_k(\alpha) & \text{ if } d_1(\alpha) = 1 \\ d_k(\alpha)/d_1(\alpha) & \text{ if } d_1(\alpha) \not=1,\end{cases}
\]
where $d_k(\alpha)$ and $x_1(k)$ are defined as in (\ref{dkdef}) and (\ref{xidef5millionthtime}), respectively. 

\begin{proof} By (\ref{dkrecurrence}) and (\ref{xirecurrence}), we know that $x_1(k)$ and $d_k(\alpha)$ both satisfy the same recurrence. So, it suffices to find a basis for $\O_L$ so that the initial conditions of $x_1(k)$ match those of $d_k(\alpha)$. Let $\vec{a} = \begin{pmatrix} d_0 & d_1 & d_2 & d_3 \end{pmatrix}$. If $d_1(\alpha)=1$, then $\vec{a}$ is a primitive lattice element in $\Z^4$ and so $\exists A \in \GL_4(\Z)$ with first column $\vec{a}$. Let $\{w_1, w_2, w_3, w_4\}$ be the basis obtained from $\{1, \sqrt{\alpha}, \sqrt{\alpha}^2, \sqrt{\alpha}^3\}$ by change of basis matrix $A^{-1}$. Then $x_1(k)$ has the desired initial conditions. If $d_1(\alpha) \not=1$, then we replace the sequence $d_k(\alpha)$ by $d_k(\alpha)/d_1(\alpha)$ in the argument above. 
\end{proof}
\end{prop}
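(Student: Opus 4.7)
The plan is to exploit the fact that, by equations (\ref{dkrecurrence}) and (\ref{xirecurrence}), both $d_k(\alpha)$ and any coordinate sequence $x_1(k)$ satisfy the same order-$4$ linear homogeneous recurrence $y_{k+4} = T y_{k+2} - y_k$. Since a solution to this recurrence is uniquely determined by its first four values, the task reduces to engineering a basis $W = \{w_1, w_2, w_3, w_4\}$ of $\O_L$ so that the first coordinate sequence has the prescribed initial values.

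I would begin with the natural basis $\{1, \eta, \eta^2, \eta^3\}$ of $\O_L = \Z[\eta]$, in which $\eta^k$ has coordinate vector $e_{k+1}$ for $k = 0, 1, 2, 3$. Any other $\Z$-basis $W$ is obtained by applying a matrix $A \in \GL_4(\Z)$, and unwinding the change of coordinates shows that $(x_1(0), x_1(1), x_1(2), x_1(3))^{\top}$ reads off as a column of $A$. So the question becomes: given a target initial vector $\vec{a} \in \Z^4$, when does there exist $A \in \GL_4(\Z)$ having $\vec{a}$ as a column? By the standard geometry-of-numbers fact used earlier (in the proof of Proposition \ref{quarticprop}), this happens precisely when $\vec{a}$ is primitive.

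In the case $d_1(\alpha) = 1$, I would take $\vec{a} = (d_0(\alpha), d_1(\alpha), d_2(\alpha), d_3(\alpha))^{\top}$; primitivity is automatic since the second entry equals $1$. The resulting basis gives $x_1(k) = d_k(\alpha)$ for $k = 0, 1, 2, 3$, and then for all $k$ by the shared recurrence. In the case $d_1(\alpha) \neq 1$, I would apply the same argument to the rescaled sequence $d_k(\alpha)/d_1(\alpha)$: it is integer-valued because $d_k(\alpha)$ is a divisibility sequence (so $d_1(\alpha) \mid d_k(\alpha)$ for every $k \geq 1$), it satisfies the same linear recurrence (by linearity and homogeneity), and its initial vector has second entry $1$, hence is primitive.

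The only point requiring care is the bookkeeping around the hypothesis $\O_L = \Z[\eta]$: without it, a change-of-basis matrix between $\{1, \eta, \eta^2, \eta^3\}$ and an integral basis of $\O_L$ need not lie in $\GL_4(\Z)$, and the primitivity argument would break down. Under the stated hypothesis, however, the proof amounts to observing that two sequences sharing both a recurrence and their initial values must coincide.
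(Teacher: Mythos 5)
Your proposal is correct and follows essentially the same route as the paper: observe that $x_1(k)$ and $d_k(\alpha)$ share the order-$4$ recurrence, then match initial conditions by extending the primitive vector $(d_0, d_1, d_2, d_3)^{\top}$ (or its rescaling by $d_1(\alpha)$) to a column of a matrix in $\GL_4(\Z)$ and changing basis from $\{1,\eta,\eta^2,\eta^3\}$. You additionally spell out two details the paper leaves implicit --- primitivity via the second entry being $1$, and integrality of $d_k(\alpha)/d_1(\alpha)$ via the divisibility property --- which is a welcome but not substantively different refinement.
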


\begin{rmk} Since the coordinate sequence $\{x_i(k): k \in \Z_{\geq 0}\}$ of $\beta^k$ for any $\beta \in \bar{\Z}$ defined in (\ref{xidef5millionthtime}) are linear with distinct characteristic roots, they grow exponentially (see Chapter 2 of \cite{recurrence}, for example). So Theorem \ref{growth} implies that if $x_i(k) = d_k(\alpha)$ for some $\alpha \in \bar{\Z}$ and fixed index $i$, then $\alpha$ must be in one of the exceptional cases. That is, we must have a power of $\alpha$ either in $\Z$ or a quadratic unit. It would be interesting to know when a results like Proposition \ref{c} holds in the other exceptional cases. 
\end{rmk}

We finish this section by discussing how the recurrence for the coordinate sequences $\{x_i(k): k \in \Z_{\geq 0}\}$ of $\alpha^k$, for some $\alpha \in \bar{\Z}$, could be used to study the sequence $d_k(\alpha)$ defined in (\ref{dkdef}). \\

By definition, we have that $d_k(\alpha)$ is the largest positive integer satisfying 
\begin{equation}
\label{dkalternate}
\alpha^k - 1 \in d_k(\alpha) \O_K.
\end{equation}
 Let $\{1, w_2, \dots, w_n\}$ be an integral basis for for $K$. If we write
\[\alpha^k = x_1(k)+x_2(k)w_2 + \cdots + x_n(k)w_n.\]
Then we have 
\begin{equation}
\label{dkxi} d_k(\alpha) = \gcd (x_1(k)-1, x_2(k), \dots, x_n(k)).
\end{equation}
Furthermore, by Corollary \ref{cor1} each of the sequences $x_i(k)$ has characteristic polynomial equal to the minimal polynomial of $\alpha$. As in the previous sections, we can change the initial conditions of $x_i(k)$ by changing the basis of $\O_K$. We use these observations to study the following conjecture.

\begin{conj}[\cite{algints} Conjecture 9] \label{silvermanconj}For $\alpha \in \bar{\Z}$, suppose one of the following holds:
\begin{enumerate}
\item[(a)]  $[\Q(\alpha^r): \Q] \geq 3 \,\, \forall r \geq 1$, or
\item[(b)] $[\Q(\alpha^r): \Q] \geq 2 \,\, \forall r \geq 1 \text{ and } N_K(\alpha) \not=\pm 1.$
\end{enumerate}
Then, the set $\{ k \geq 1 \mid d_k(\alpha) = d_1 (\alpha)\}$ is infinite
\end{conj}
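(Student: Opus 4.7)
The plan is to recast the statement through the coordinate-sequence framework of Section \ref{sec: 5} and reduce it to a covering-system problem about the ranks of apparition of primes in the divisibility sequence $d_k(\alpha)$. Set $d := d_1(\alpha)$ and $K := \Q(\alpha)$, write $\alpha - 1 = d\gamma$ with $\gamma \in \O_K$ (primitive, by maximality of $d$), and factor
\[
\alpha^k - 1 \;=\; d\, \gamma \,\bigl(1 + \alpha + \cdots + \alpha^{k-1}\bigr).
\]
Choosing an integral basis $\{1, w_2, \ldots, w_n\}$ for $\O_K$ and expanding the right-hand side coordinate-wise yields integer sequences $g_1(k), \ldots, g_n(k)$ with $d_k(\alpha)/d_1(\alpha) = \gcd(g_1(k), \ldots, g_n(k))$. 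By Corollary \ref{cor1}, each $g_i(k)$ is a linear recurrence with characteristic polynomial equal to the minimal polynomial of $\alpha$, so the conjecture becomes the claim that $\gcd(g_1(k), \ldots, g_n(k)) = 1$ for infinitely many $k$.

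Next I would exploit that $d_k(\alpha)$ is itself a divisibility sequence, so the set $A := \{k \geq 1 : d_k(\alpha) = d_1(\alpha)\}$ is closed under gcd (from $d_{\gcd(m,n)} \mid d_m$ combined with the universal containment $d_1 \mid d_k$). Thus $A$ is either infinite or contained in some finite interval $[1,M]$. Assuming for contradiction $A \subseteq [1,M]$, define for each prime $p \nmid d\, N_K(\alpha)$ the rank of apparition $r(p)$ as the smallest $k \geq 1$ with $\alpha^k \equiv 1 \pmod{p\O_K}$; then $p \mid d_k/d$ iff $r(p) \mid k$. The finitely many primes $p \mid d$ each contribute one periodic "bad" set of density strictly less than $1$, while primes $p \mid N_K(\alpha)$ with $p \nmid d$ contribute nothing (since $\alpha$ is then not a unit modulo $p\O_K$). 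The hypothesis $A \subseteq [1,M]$ thus forces
\[
\Z_{>M} \;\subseteq\; F \;\cup\; \bigcup_{p \,\nmid\, d\,N_K(\alpha)} r(p)\,\Z,
\]
where $F$ is a finite union of proper arithmetic progressions, and the conjecture reduces to ruling out this covering.

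The main quantitative input is a lower bound on $r(p)$. The trivial estimate $p \leq |N_K(\alpha^{r(p)} - 1)| \leq C^{r(p)}$, with $C$ the Mahler measure of $\alpha$, yields $r(p) \gg \log p$. Under hypothesis (a) or (b), a Chebotarev-type analysis of Frobenius at $p$ acting on $\Q(\alpha)$ should upgrade this bound along a density-one subset of primes: hypotheses (a) and (b) are precisely what prevent $\alpha$ from collapsing into a low-degree element in any proper subfield, and this rigidity should translate into $\alpha$ generating a large cyclic subgroup of $(\O_K/p\O_K)^\times$ for most $p$, in direct analogy with primitive-root behavior of rational integers. If one can exhibit a positive-density subset of primes with $r(p) \geq p^{\delta}$ for some fixed $\delta > 0$, then $\sum_p 1/r(p)$ converges, the density of $\bigcup_p r(p)\,\Z$ is strictly less than $1$, and the covering displayed above is impossible, giving the desired contradiction.

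The main obstacle is producing this lower bound on $r(p)$ unconditionally. In the classical case $\alpha \in \Q^\times$ the required statement is essentially Artin's primitive root conjecture, which is known only under GRH via Hooley, and extending it to algebraic bases satisfying (a) or (b) seems to require either GRH for number fields, effective lower bounds for linear forms in logarithms of Baker type, or an adaptation of the Bilu--Hanrot--Voutier primitive-prime-divisor machinery to higher-dimensional Lucas-like sequences. Accordingly, I would first establish the conjecture conditionally on $r(p) \gg p^\delta$ along a positive-density subset of primes (which reduces the problem to the elementary covering argument above), and then attempt the unconditional Chebotarev-plus-transcendence argument as a separate subproblem where the structure imposed by (a) and (b) on the Galois action should play the decisive role.
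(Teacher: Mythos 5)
There is a fundamental mismatch here: the statement you were asked about is a \emph{conjecture} (Silverman's Conjecture 9 in \cite{algints}), and the paper does not prove it --- nor does anyone. What the paper does in Section \ref{sec: 5} is far more modest: it proves Proposition \ref{lastprop}, a statement about the shape of the minimal polynomial forcing $y_1(1+\ell t)=0$ for the coordinate sequence in the power basis, and uses it to exhibit explicit families (e.g.\ $\alpha$ with $\alpha^2$ a fundamental quadratic unit and $\O_K=\Z[\alpha]$ monogenic, verified in Sage for small discriminants) where the conclusion of the conjecture holds. Your proposal, by contrast, attempts a proof of the full conjecture, and it does not close: the decisive step --- a lower bound $r(p)\geq p^{\delta}$ for the rank of apparition along a positive-density set of primes --- is an open problem of Artin-primitive-root strength, known only under GRH even for $\alpha\in\Q^{\times}$, and you acknowledge this yourself. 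A reduction of an open conjecture to another open statement is a research program, not a proof, so there is a genuine and unfillable gap as written.

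Beyond that, the elementary part of your reduction also needs repair before it would even give the conditional statement. First, the relevant divisibility condition for $k\notin A$ is $p\,d_1(\alpha)\mid d_k(\alpha)$ for some prime $p$; for the finitely many primes $p\mid d_1(\alpha)$ this is not governed by a single rank of apparition modulo $p\O_K$, and your claim that each such prime contributes ``one periodic bad set of density strictly less than $1$'' is asserted, not proved (higher congruences $\alpha^k\equiv 1 \pmod{p^j}$ enter, and ramified primes and primes dividing the index $[\O_K:\Z[\alpha]]$ need separate handling). Second, even granting that, your covering contradiction requires the \emph{total} density of $F\cup\bigcup_p r(p)\Z$ to be strictly less than $1$; a finite union of sets each of density less than $1$ can perfectly well cover $\Z_{>M}$, so you would need a quantitative bound on the sum of all the densities, including the contributions from $p\mid d_1(\alpha)$, not just convergence of $\sum_p 1/r(p)$ over the unramified primes. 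If your goal is to produce content consistent with the paper, the correct target is Proposition \ref{lastprop} and the accompanying example, where the argument is a short computation with the recurrence $y_1(k+n)=s_t y_1(k+n-t)+\cdots$ and the containment $\O_K\subseteq\frac{1}{\Delta}\Z[\alpha]$, yielding $d_k(\alpha)=d_1(\alpha)=1$ for all $k\equiv 1 \pmod t$ in the monogenic case --- no analytic input at all.
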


The following proposition can be used to construct examples where Conjecture \ref{silvermanconj} holds. 

\begin{prop} \label{lastprop} Let $\alpha \in \bar{Z}$ have minimal polynomial
\[f(X)=X^n-s_1X^{n-1} - \cdots - s_n,\]
and set $K = \Q(\alpha)$. If there exists a positive divisor $t$ of $n$ so that $s_i =0$  $\forall i \not\in t\Z$, then we have
$d_n(\alpha) \leq \disc(1, \alpha, \dots, \alpha^{n-1})$
for all $n \in 1+ t \Z$. In particular, if $\O_K = \Z[\alpha]$, then $d_n(\alpha)=d_1(\alpha)=1$ for all $n \in 1+ t \Z$. 

\begin{proof} Let $\Delta = \disc(1, \alpha, \dots, \alpha^{n-1})$, and $\tilde{d}_n$ denote the largest integer with 
\[\alpha^k-1 \in \tilde{d}_k \Z[\alpha].\]
Since $\O_K \subset \frac{1}{\Delta} \Z[\alpha]$, then by (\ref{dkalternate}) we have
\[ \alpha^k - 1 \in d_k \O_K \subset \frac{d_k}{\Delta} \Z[\alpha].\]
By definition of $\tilde{d}_k$, we have $d_k \leq \Delta \tilde{d}_k$. \\

Now, write
\[\alpha^k = y_1(k)+y_2(k) \alpha +\cdots + y_n(k) \alpha^{n-1}.\]
Then, similar to (\ref{dkxi}) we observe that 
\[\tilde{d}_k = \gcd(y_1(k) -1, y_2(k), \dots, y_n(k)).\] If $s_i=0$ $\forall i \not\in t \Z$, then by Corollary \ref{cor1} we have
\[y_1(k+n)= s_t y_1(k+n-t) + \cdots + s_{\ell t} y_i (k+n - \ell t). \]
Since $y_1(k)$ has initial conditions $y_1(0)=1, y_1(1)=\cdots = y_1(n-1) =0$, we see that $y_1(1 + \ell t) =  0$ for any $\ell \in \Z_{ \geq 0}$. 
\end{proof}
\end{prop}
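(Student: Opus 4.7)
The plan is to reduce the divisibility condition in $\O_K$ to one in the subring $\Z[\alpha]$, and then exploit the sparse recurrence dictated by the shape of the minimal polynomial of $\alpha$. Let $\Delta = \disc(1, \alpha, \ldots, \alpha^{n-1})$ and introduce an auxiliary sequence $\tilde{d}_k$, defined as the largest positive integer with $\alpha^k - 1 \in \tilde{d}_k \, \Z[\alpha]$. Since $\O_K \subseteq \tfrac{1}{\Delta} \Z[\alpha]$, equation (\ref{dkalternate}) gives $\alpha^k - 1 \in d_k \O_K \subseteq \tfrac{d_k}{\Delta} \Z[\alpha]$, and hence $d_k \leq \Delta \tilde{d}_k$ by definition of $\tilde{d}_k$. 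It therefore suffices to show $\tilde{d}_k = 1$ for every $k \equiv 1 \pmod t$, since the ``In particular'' claim will then follow from $d_k = \tilde{d}_k = 1$ when $\O_K = \Z[\alpha]$.

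Expanding in the power basis, $\alpha^k = y_1(k) + y_2(k)\alpha + \cdots + y_n(k) \alpha^{n-1}$, the argument that produced (\ref{dkxi}) applied to $\Z[\alpha]$ instead of $\O_K$ yields $\tilde{d}_k = \gcd(y_1(k) - 1,\, y_2(k), \ldots, y_n(k))$. So to force $\tilde{d}_k = 1$ it is enough to show that $y_1(k) = 0$ for every $k \in 1 + t\,\Z_{>0}$, for in that case $y_1(k)-1 = -1$ kills the gcd.

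This is where Corollary \ref{cor1} enters: each $y_i$ satisfies the linear recurrence whose characteristic polynomial is the minimal polynomial $f$ of $\alpha$. The sparsity hypothesis $s_j = 0$ for $j \notin t\Z$ collapses this to
\[
y_1(k+n) \;=\; s_t\, y_1(k+n-t) + s_{2t}\, y_1(k+n-2t) + \cdots + s_n\, y_1(k),
\]
in which every shift is by a multiple of $t$. The initial conditions $y_1(0) = 1$ and $y_1(j) = 0$ for $1 \leq j \leq n-1$ already give the vanishing of $y_1$ at every element of $\{1 + \ell t : \ell \geq 0,\ 1 + \ell t \leq n-1\}$. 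A short induction on $\ell$, using the sparse recurrence to express $y_1(1 + \ell t)$ as a $\Z$-linear combination of earlier values $y_1(1 + \ell' t)$ with $\ell' < \ell$, extends the vanishing to all $\ell \geq 0$. Combining this with the reduction above completes the proof of the inequality, and the second clause is then immediate.

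The main obstacle, such as it is, is the bookkeeping needed to verify that the sparse recurrence closes on the arithmetic progression $\{1 + \ell t : \ell \geq 0\}$; this hinges on $t \mid n$, which guarantees that every shift $-jt$ appearing in the recurrence preserves the residue class mod $t$. Everything else is a routine assembly of the index inclusion $\O_K \subseteq \tfrac{1}{\Delta}\Z[\alpha]$, Corollary \ref{cor1}, and the gcd description of $\tilde{d}_k$.
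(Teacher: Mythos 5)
Your proposal is correct and follows essentially the same route as the paper's proof: the reduction $d_k \leq \Delta\,\tilde d_k$ via $\O_K \subseteq \frac{1}{\Delta}\Z[\alpha]$, the gcd description of $\tilde d_k$ in the power basis, and the vanishing of $y_1$ on the progression $1+t\Z_{\geq 0}$ from the sparse recurrence. Your explicit remark that $t \mid n$ is what keeps the recurrence's shifts inside the residue class $1 \bmod t$ makes the inductive step slightly more transparent than the paper's one-line assertion, but the argument is the same.
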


We give an example to demonstrate how to use Proposition \ref{lastprop} to generate examples where Conjecture \ref{silvermanconj} holds.

\begin{example*} Let $\beta$ be an element of a quadratic number field, and $\alpha$ any algebraic integer satisfying $\alpha^2 = \beta$. Observe that the minimal polynomial of $\alpha$ is of the form
\[f(X)=X^4-\Tr_{L/ \Q}(\beta) X^2 + N_{L/\Q}(\beta),\]
where $L = \Q(\beta)$, 
and so by Proposition \label{lastprop} we have that
\[d_k(\alpha) \leq \disc(1, \alpha, \alpha^2, \alpha^3)\] 
whenever $k$ is odd. So, Conjecture \ref{silvermanconj} holds whenever 
\begin{equation}
\label{monogenic} \O_K=\Z[\alpha] \text{ where $K = \Q(\alpha)$. }
\end{equation}
Number fields $K$ satisfying (\ref{monogenic}) are called \textit{monogenic}. \\

We searched for examples of such $\alpha$ by letting $\alpha$ be a quartic unit satisfying $\alpha^2 = : \epsilon$, for $\epsilon$ a fundamental unit in a quadratic subfield of $K$, as in Section \ref{sec: 4}. Using Sage to check whether $\O_K = \Z[\alpha]$, we checked the first ten real quadratic fields (ordered by discriminant), and found the following list of examples:
\[\textstyle \sqrt{1+\sqrt{2}}, \sqrt{\frac{1}{2}(1+\sqrt{5})}, \sqrt{3+\sqrt{10}}, \sqrt{\frac{1}{2}(3+\sqrt{13})}, \sqrt{4+\sqrt{15}}\]
It would be interesting to provide a characterization of all monogenic biquadratic fields with generator of the form $\alpha$ where $\alpha^2$ is a unit in a quadratic subfield, as this would provide a family of examples where Conjecture \ref{silvermanconj} holds for the sequence $d_k(\alpha)$. We note that the problem of characterizing monogenic biquadratic fields was studied in \cite{gaal}.  
\end{example*}

\section*{Acknowledgments}

This project was initiated during my visit to the Max Planck Institute for Mathematics, in Bonn, in Spring 2019. I acknowledge the support provided by MPIM in the early stages of this project. This project was also partially supported by the National Science Foundation award DMS-2001281. I would also like to thank Professor Shabnam Akhtari for her support in this work, as well as Professor Joseph Silverman for comments and suggestions on an earlier version of this manuscript, in particular for pointing out the related sequences discussed in Section 5.

%%%%%%

\vspace{4em}

\small
Department of Mathematics, University of Oregon \\
\textit{E-mail address: }{\href{mailto:ebellah@uoregon.edu}{ebellah@uoregon.edu}} \\

\end{document}